\theoremstyle{definition}
\newtheorem{remark}{Remark}
\newtheorem{lemma}{Lemma}
\newtheorem{theorem}{Theorem}
\let\eps=\varepsilon
\let\leq=\leqslant
\let\geq=\geqslant
\def\I{\mathcal{I}} \def\J{\mathcal{J}} 
\def\O{\mathcal{O}}
\def\E{\mathcal{E}}
\def\s{\mathbf{s}} \def\t{\mathbf{t}} 
\def\i{i_1,\ldots,i_d}
\def\rank{\mathop{\mathrm{rank}}\nolimits}
\def\det{\mathop{\mathrm{det}}\nolimits}
\def\vol{\mathop{\mathrm{vol}}\nolimits}
\def\maxvol{\mathop{\mathrm{maxvol}}\nolimits}
\def\lft{\triangleleft}
\def\rgt{\triangleright}
\def\trans{*}
\def\new{\star}
\begin{document}
\author{Dmitry V. Savostyanov%
\thanks{University of Southampton, Department of Chemistry, Highfield Campus, Southampton SO17 1BJ, United Kingdom 
        ({\tt dmitry.savostyanov@gmail.com})}
\thanks{Institute of Numerical Mathematics of Russian Academy of Sciences, Gubkina 8, Moscow 119333, Russia}
}
\title{Quasioptimality of maximum--volume cross interpolation of tensors%
\thanks{Partially supported by
         RFBR grants 11-01-00549-a, 12-01-33013, 12-01-00546-a, 12-01-91333-nnio-a,
         Rus. Fed. Gov. project 16.740.12.0727 
         at INM RAS
         and EPSRC grant EP/H003789/1 at the University of Southampton.
         }}
\date{June 15, 2013}
\maketitle

\begin{abstract}
We consider a cross interpolation of high--dimensional arrays in the tensor train format.
We prove that the maximum--volume choice of the interpolation sets provides the quasioptimal interpolation accuracy, 
that differs from the best possible accuracy by the factor which does not grow exponentially with dimension.
For nested interpolation sets we prove the interpolation property and propose greedy cross interpolation algorithms.
We justify the theoretical results and test the speed and accuracy of the proposed algorithm with convincing numerical experiments. 
\par {\it Keywords:} high--dimensional problems, tensor train format, maximum--volume principle,  cross interpolation.
\par {\it AMS:}
15A69,  
15A23,  
65D05,  
65F99.  
\end{abstract}

\section{Introduction}
As demand for big data analysis grows, high--dimensional data and algorithms have become increasingly important in scientific computing.
The total number of entries in a \emph{tensor} (an array with $d$ indices) grows exponentially with dimension $d.$
Even for a moderate $d,$ it is impossible to process, store or compute all elements of a tensor by standard methods. 
This issue is known in numerical analysis and related areas as the \emph{curse of dimensionality}.
Different techniques are used to relax or to overcome this problem, e.g. low--parametrical representation on {sparse grids}~\cite{smolyak-1963,griebel-sparsegrids-2004}, (Markov chain) Monte Carlo sampling in statistics~\cite{hastings-mcmc-1970}, model/dimensionality reduction, etc.
Significant progress has been made in the development and understanding of the \emph{tensor product} methods (see reviews~\cite{kolda-review-2009,khor-survey-2011,hackbusch-2012,larskres-survey-2013}).

The tensor product methods implement the \emph{separation of variables} at the discrete level, which in the two--dimensional case is known as the \emph{low rank decomposition} of a matrix.
Several approaches have been developed to generalize rank--structured low--parametrical models to tensors (see~\cite{kolda-review-2009} for details), and a particularly simple and efficient~\emph{tensor train} (TT) format has been proposed recently~\cite{osel-tt-2011}.
It is equivalent to the \emph{matrix product states} (MPS) introduced in the quantum physics community to represent the quantum states of the many--body systems~\cite{fannes-mps-1992,klumper-mps-1993}.
The optimization algorithms for the MPS include \emph{alternating least squares} (ALS) algorithm, which works with the fixed tensor structure, and the \emph{density matrix renormalization group} (DMRG) algorithm~\cite{white-dmrg-1993,ostlund-dmrg-1995}, which adaptively changes the \emph{ranks} of the tensor format, and manifests much faster convergence in numerical experiments.
When the TT format was re-discovered in the numerical linear algebra community, both the ALS and DMRG schemes were adapted for other high--dimensional problems and novel algorithms were proposed. 
As a result we can use the TT/MPS format to approximate high--dimensional data and perform algebraic operations~\cite{schollwock-dmrg-mps-2010} (cf. \cite{osel-tt-2011,Os-mvk2-2011}), solve linear systems~\cite{jeckelmann-dmrgsolve-2002,holtz-ALS-DMRG-2012,DoOs-dmrg-solve-2011,ds-amr1-2013,ds-amr2-2013}, compute the multidimensional Fourier transform~\cite{dks-ttfft-2012} and discrete convolution~\cite{khkaz-conv-2011}.
With these algorithms in hand, high--dimensional scientific computations become possible as soon as all data are somehow translated into the TT format.

It is crucial, therefore, to develop algorithms which construct the approximation of a given high--dimensional array in the tensor format.
For some function--related tensors, the TT representation is written explicitly (see e.g.~\cite{khor-qtt-2011,osel-constr-2013}).
In general, although every entry of a tensor can be computed \emph{on demand} (by a formula or as a solution of a feasible problem, e.g. PDE in three dimensions), all elements cannot be computed in a reasonable time. 
The question arises naturally whether a tensor can be \emph{reconstructed} or \emph{interpolated} in the TT format from a few elements, also known as \emph{samples}.

For matrices, i.e. $2$--tensors, this question is well studied. 
We know that a rank--$r$ matrix is recovered from a \emph{cross} of $r$ rows and columns if the submatrix on their intersection is nonsingular.
When data are not exactly represented by the low--rank model, the accuracy of the \emph{cross interpolation} depends crucially on the chosen cross.
A notable choice is the~\emph{maximum volume} cross, which has the $r\times r$ submatrix with the maximum determinant in modulus on the intersection.
For this cross, the interpolation accuracy differs from the accuracy of the best possible approximation by the factor $\O(r^2),$ i.e. is \emph{quasioptimal}~\cite{schneider-cross2d-2010,gt-skel-2011}.

For tensors in the TT format an analog of the cross interpolation formula is given in~\cite{ot-ttcross-2010}. 
It reconstructs a tensor from a few samples under mild non-singularity conditions, if the TT representation is exact.
For the approximate case, the ALS type algorithm is suggested in~\cite{ot-ttcross-2010}, which searches for the better crosses in order to improve the approximation accuracy.
The rank--adaptive DMRG--like version of this algorithm is proposed in~\cite{so-dmrgi-2011proc}.
These algorithms are~\emph{heuristic}, as well as interpolation algorithms developed for other tensor formats, e.g. the Tucker~\cite{ost-tucker-2008,ost-chem-2010} and the \emph{hierarchical Tucker} (HT) format~\cite{lars-htcross-2013,bebe-maca-2013pre}.

The accuracy of the cross interpolation of tensors has not been well studied yet.
For the $3$--dimensional Tucker model the quasioptimality with the factor $\O(r^3)$ is shown in~\cite{ost-tucker-2008}.
In $d$ dimensions we can expect an excessively large coefficient $\O(r^d),$  cf. $\O(r^{2d})$  for the HT format~\cite{lars-htcross-2013}.
The main result of this paper is more optimistic. 
The quasioptimality of the maximum volume cross interpolation is generalized to the TT format with the coefficient $(2r+\kappa r+1)^{\lceil \log_2 d \rceil+2}$ that \emph{does not necessarily grow exponentially} with $d.$ 

The paper is organized as follows.
Sec.~\ref{sec:def} presents notation and definitions.
In Sec.~\ref{sec:mvol} the quasioptimality of the maximum--volume cross interpolation is proven.
In Sec.~\ref{sec:emb} the interpolation on \emph{nested} sets is considered, which reduces the search space, but results in the larger quasioptimality constant.
In Sec.~\ref{sec:emb2} the interpolation property for the nested sets is shown.
In Sec.~\ref{sec:alg} practical cross interpolation algorithms for matrices are recalled and similar algorithms for tensor trains are proposed.
In Sec.~\ref{sec:num} the coefficient of the quasioptimality is measured for randomly generated tensors, and speed and accuracy of the proposed algorithm is demonstrated with numerical experiments.

\section{Notation, definitions and preliminaries} \label{sec:def}
The \emph{tensor train} (TT) decomposition of a tensor $A=\left[A(i_1,\ldots,i_d)\right]$ is written as follows 
\begin{equation}\label{eq:tt}
 \begin{split}
 A(i_1,\ldots,i_d) & = \sum_{\s} X^{(1)}(i_1,s_1) X^{(2)}(s_1,i_2,s_2) \ldots X^{(d-1)}(s_{d-2},i_{d-1},s_{d-1}) X^{(d)}(s_{d-1},i_d)
   \\ & = \sum_{\s} \prod_{k=1}^d X^{(k)}(s_{k-1},i_k,s_k).
 \end{split}  
\end{equation}
In this equation $i_k=1,\ldots,n_k,$ $k=1,\ldots,d,$ are \emph{mode} or physical indices, and $s_k=1,\ldots, r_k$ are auxiliary \emph{rank} indices. 
Values $n_k$ are referred to as~\emph{mode sizes} of a tensor, and $r_k$ are~\emph{tensor train ranks} or TT--ranks.
Summation over $\s=(s_1,\ldots,s_{d-1})$ means summation over all pairs of auxiliary indices $s_1,\ldots,s_{d-1},$ where each index runs through all possible values.
We use elementwise notation, i.e. assume that all equations hold for all possible values of \emph{free} indices.
Therefore, Eq.~\eqref{eq:tt} represents every entry of a tensor by the product of matrices, where each $X^{(k)}(i_k)=[X^{(k)}_{s_{k-1},s_k}(i_k)]$ has size $r_{k-1}\times r_k$ and depends on the~\emph{parameter} $i_k.$
The three--dimensional array $X^{(k)}=\left[X^{(k)}(s_{k-1},i_k,s_k)\right]$ is referred to as \emph{TT--core}.
To unify the notation, we introduce the virtual \emph{border ranks} $r_0=r_d=1$ and consider 
$[X^{(1)}(i_1,s_1)]=[X^{(1)}(s_0,i_1,s_1)]$ and $[X^{(d)}(s_{d-1},i_d)]=[X^{(d)}(s_{d-1},i_d,s_d)]$ as $3$--tensors.

The elementwise notation allows us to reshape tensors into vectors or matrices simply by moving indices. 
We have done this to present the TT--core $\left[X^{(k)}(s_{k-1},i_k,s_k)\right]$ as the parameter--dependent matrix $[X^{(k)}_{s_{k-1},s_k}(i_k)].$
More complicated transformations can be expressed by \emph{index grouping}, which combines indices $i_1,\ldots,i_d$ in the single multi--index $\overline{i_1\ldots i_d}.$%
\footnote{ 
The multi--index is usually defined by either the \emph{big--endian} convention
$\overline{i_1\ldots i_d}=i_d+(i_{d-1}-1)n_d +\ldots+(i_1-1)n_2\ldots n_d$
or the \emph{little--endian} convention
$\overline{i_1\ldots i_d}=i_1+(i_2-1)n_1 +\ldots+(i_d-1)n_1\ldots n_{d-1}.$
The big--endian notation is similar to numbers written in the positional system, while the {little--endian} notation is used in numerals in the Arabic scripts and is consistent with the \textsc{Fortran} style of indexing.
The exact formula which maps indices to the multi--index is not essential in this paper.  
} 
For example, the $k$--th \emph{unfolding} of a tensor is the $(n_1\ldots n_k) \times (n_{k+1}\ldots n_d)$ matrix with elements
\begin{equation}\nonumber 
A^{\{k\}}(i_{\leq k},i_{>k}) = A^{\{k\}}(\overline{i_1\ldots i_k}, \overline{i_{k+1}\ldots i_d}) =  A(i_1,\ldots,i_d).
\end{equation}
Here and further we use the following shortcuts to simplify the notation
$$
i_{\leq k} = \overline{i_1\ldots i_k}, \quad i_{>k} = \overline{i_{k+1}\ldots i_d}, 
\qquad\mbox{and}\qquad i_{b:c}=\overline{i_b\ldots i_c}. 
$$
For $A$ in the TT--format~\eqref{eq:tt} it holds  $\rank A^{\{k\}}=r_k.$
In~\cite{osel-tt-2011} the reverse is proven: for any tensor $A$ there exists the representation~\eqref{eq:tt} with TT--ranks $r_k=\rank A^{\{k\}}.$
This gives the term \emph{TT--rank} the definite algebraic meaning.

For a $m\times n$ matrix $A=\left[A(i,j)\right]$ the \emph{cross} (or \emph{skeleton}) interpolation is written as follows
\begin{equation}\label{eq:im}
 A(i,j) \approx \tilde A(i,j)  = \sum_{s,t} A(i,\J_t) \left[A(\I_s,\J_t)\right]^{-1} A(\I_s,j).
\end{equation}
Here sets $\I=\{\I_1,\ldots,\I_r\}$ and $\J=\{\J_1,\ldots,\J_r\}$ define the positions of the interpolation rows and columns, respectively.  
The summation over $s,t=1,\ldots,r$ ties the pairs of subsets together, similarly to the pairs of indices in~\eqref{eq:tt}.
In the matrix form the right hand side of~\eqref{eq:im} is the product of $m \times r$ matrix of columns, the inverse of $r\times r$ submatrix at the intersection and $r\times n$ matrix of rows. 
The essential property of the interpolation is that~\eqref{eq:im} is~\emph{exact} on its cross
\begin{equation}\label{eq:ip}
 A(i,j) = \tilde A(i,j) = \sum_{s,t} A(i,\J_t) \left[A(\I_s,\J_t)\right]^{-1} A(\I_s,j), \qquad 
    \mbox{if}\quad  i\in\I \mbox{ or } j\in\J.
\end{equation}

When $A$ is not exactly a rank-$r$ matrix, the choice of interpolation sets $\I,\J$ may affect the interpolation accuracy significantly.
A good choice of $A_\Box=\left[A(\I,\J)\right]$  is the~\emph{maximum--volume} $r\times r$ submatrix, such that  $\vol A_\Box = |\det A_\Box| $ is maximal over all possible choices of $\I$ and $\J.$
Assuming that the ranks (sizes of submatrices) are defined \emph{a priori}, we denote this choice by
\begin{equation}\nonumber
  \left[\I,\J\right] = \arg\max_{\I',\J'}\vol [A(\I',\J')], \qquad\mbox{or}\qquad  [\I,\J] = \maxvol A.
\end{equation}
For $\I,\J$ chosen by the maximum--volume principle, the following \emph{quasioptimality} statements are proven in~\cite{gt-maxvol-2001} and~\cite{schneider-cross2d-2010,gt-skel-2011}, respectively.
\begin{equation}\label{eq:qm}
 \begin{split}
   \| A - \tilde A \|_C & \leq (r+1)^{\phantom{2}}   \min\nolimits_{\rank X=r} \| A - X \|_2, \\
   \| A - \tilde A \|_C & \leq (r+1)^2               \min\nolimits_{\rank X=r} \| A - X \|_C. 
  \end{split}
\end{equation}
Another important property of the maximum--volume submatrix is that it is \emph{dominant} (see~\cite{gostz-maxvol-2010} for more details) in the rows and columns which it occupies, i.e.
\begin{equation}\label{eq:dom}
  \left| \sum_{t} \left[ A(\I_t,\J_s) \right]^{-1} A(\I_t,j) \right| \leq 1, 
  \qquad
  \left| \sum_{s} A(i,\J_s) \left[ A(\I_t,\J_s) \right]^{-1} \right| \leq 1.
\end{equation}

In~\cite{ot-ttcross-2010} it is shown that if a tensor $A$ is exactly given by~\eqref{eq:tt} with TT--ranks $r_k,$ it is recovered from $\O(d n r^2)$ tensor entries%
\footnote{We always assume $n_1=n_2=\ldots=n_d=n$ and $r_1=\ldots=r_{d-1}=r$ in complexity estimates} 
by the following formula.
\begin{equation}\label{eq:ii}
 \begin{split}
 A(i_1,\ldots,i_d) & = \sum_{\s,\t} A(i_1,\I_{t_1}^{> 1}) \left[ A(\I_{s_1}^{\leq 1},\I_{t_1}^{>1})\right]^{-1}  A(\I_{s_1}^{\leq 1},i_2,\I^{>2}_{t_2})  \ldots   A(\I^{\leq d-1}_{s_{d-1}},i_d)
 \\ & = \sum_{\s,\t} \prod_{k=1}^d A(\I^{\leq k-1}_{s_{k-1}},i_k,\I^{>k}_{t_k}) \left[A(\I^{\leq k}_{s_k},\I^{>k}_{t_k})\right]^{-1},
 \end{split}
\end{equation}
where $\I^{\leq k}_{s_k}$ and $\I^{>k}_{t_k}$ denote the positions of $r_k$ rows and columns in the $k$--th unfolding $A^{\{k\}}.$
To unify the notation, we introduce the empty border sets $\I^{\leq 0}=\emptyset$ and $\I^{>d}=\emptyset.$
We denote submatrices on the intersection of interpolation crosses as follows
\begin{equation}\nonumber
  \left[A(\I^{\leq k}_{s_k},\I^{>k}_{t_k})\right]_{t_k,s_k=1}^{r_k}
= \left[A(\I^{\leq k},\I^{>k})\right] = A_k,
\qquad A_k^{-1} = B^{[k]}.
\end{equation}
Throughout the paper we assume that TT--ranks of~\eqref{eq:tt} and~\eqref{eq:ii} are the same, i.e., 
sets $\I^{\leq k}=\{\I^{\leq k}_{1},\ldots,\I^{\leq k}_{r_k}\}$ and $\I^{>k}=\{\I^{>k}_{1},\ldots,\I^{>k}_{r_k}\}$  have $r_k$ elements each and $A_k=\left[A(\I^{\leq k},\I^{>k})\right]$ is $r_k \times r_k$ matrix, where $r_1,\ldots,r_{d-1}$ are TT--ranks of~\eqref{eq:tt}.
When a~\emph{choice} of $\I^{\leq k},\I^{>k}$ is considered, it means that we choose $r_k$ `left' and `right' multiindices $i_{\leq k}\in\I^{\leq k},$ $i_{>k}\in\I^{>k}.$


In~\eqref{eq:qm}, $\| \,\cdot\, \|_2$ denotes the~\emph{spectral} norm of a matrix, and $\|\,\cdot\,\|_C$ denotes the~\emph{Chebyshev} norm, also known as \emph{uniform}, \emph{supremum},  $\|\,\cdot\,\|_{\infty}$--norm, or the maximum entry in modulus.
For tensors Chebyshev and Frobenius norms are defined as follows
$$
|A|     = \|A\|_C = \max_{i_1,\ldots,i_d}|A(i_1,\ldots,i_d)|, \qquad
\|A\|^2 = \|A\|_F^2 = \sum_{i_1,\ldots,i_d}|A(i_1,\ldots,i_d)|^2.
$$

\section{Maximum--volume principle in higher dimensions} \label{sec:mvol}
We consider a tensor $A$ which is approximated by the TT format as follows
\begin{equation}\label{eq:aa}
 \begin{split}
 A(i_1,\ldots,i_d) & \approx X(i_1,\ldots, i_d) =  \sum_{\s} X^{(1)}(i_1,s_1) X^{(2)}(s_1,i_2,s_2) \ldots X^{(d)}(s_{d-1},i_d),
 \\ & \qquad |A-X| \leq E_C, \qquad \|A-X\| \leq E_F,
 \end{split}
\end{equation}
where $E_C$ and $E_F$ are known or estimated from computations or theoretical properties of $A.$
We apply~\eqref{eq:im} to $k$--th unfolding and write the cross interpolation
\begin{equation}\nonumber
 \begin{split}
  A^{\{k\}}(i_{\leq k},i_{>k}) & \approx
 \tilde A^{\{k\}}(i_{\leq k},i_{>k}) 
 = 
     \sum_{s_k,t_k} A^{\{k\}}(i_{\leq k},\I^{>k}_{t_k})  
     \left[A(\I^{\leq k}_{s_k},\I^{>k}_{t_k})\right]^{-1} 
     A^{\{k\}}(\I^{\leq k}_{s_k},i_{>k}).
 \end{split}
\end{equation}
For $\left[\I^{\leq k},\I^{>k}\right] = \maxvol A^{\{k\}}$ the accuracy is estimated by~\eqref{eq:qm} as follows
\begin{equation}\nonumber 
 \begin{split}
   \| A^{\{k\}} - \tilde A^{\{k\}} \|_C & \leq (r_k+1)^{\phantom{2}}    \| A^{\{k\}} - X^{\{k\}} \|_2 \leq (r_k+1)^{\phantom{2}}    \| A^{\{k\}} - X^{\{k\}} \|_F, \\
   \| A^{\{k\}} - \tilde A^{\{k\}} \|_C & \leq (r_k+1)^2                \| A^{\{k\}} - X^{\{k\}} \|_C.
  \end{split}
\end{equation}

We can safely omit the superscript for unfoldings when we use the pointwise notation, since the grouping of indices clearly defines the shape of the resulted matrix.
The equation for the unfolding is recast for the tensor as follows 
\begin{equation}\label{eq:qt}
 \begin{split}
  A(i_1,\ldots,i_d) & = \sum_{s_k,t_k} A(i_1,\ldots,i_k,\I^{>k}_{t_k}) 
                        B^{[k]}_{t_k,s_k} 
                        A(\I^{\leq k}_{s_k},i_{k+1},\ldots,i_d) 
   + E(i_1,\ldots,i_d),
\\ & \quad
   | E |  \leq (r_k+1)^{\phantom{2}}    E_F, \qquad
   | E |  \leq (r_k+1)^2                E_C, \qquad
   B^{[k]} = A_k^{-1}.
 \end{split}
\end{equation}
The interpolation step splits a $d$--tensor into a `product' of two tensors, which have $k$ and $d-k$ free indices, respectively.
The same splitting is done in~\cite{ot-tt-2009}, where a \emph{Tree--Tucker} format (later recast as the tensor train format) has been proposed to break the curse of dimensionality.
In~\cite{ot-tt-2009} the quasioptimality of the approximations computed by the proposed TT--SVD algorithm is shown.
Similarly, we estimate the accuracy of the interpolation--based formula~\eqref{eq:ii}.

\begin{lemma}\label{lem1}
If a tensor $A$ satisfies~\eqref{eq:aa}, then for any $k=1,\ldots,d-1$ it holds
\begin{equation}\label{eq:q1}
 \begin{split}
  A(\I^{\leq k-1}, i_k,i_{k+1},\I^{> k+1}) 
     & = \sum_{s_k,t_k} A(\I^{\leq k-1},i_k,\I^{>k}_{t_k}) 
         B^{[k]}_{t_k,s_k} 
         A(\I^{\leq k}_{s_k},i_{k+1},\I^{> k+1}) 
      \\ & + E(\I^{\leq k-1}, i_k,i_{k+1},\I^{> k+1}),
	\\ & \quad
   | E | \leq (r_k+1)^{\phantom{2}}    E_F,  \qquad
   | E | \leq (r_k+1)^2                E_C.
  \end{split}			
\end{equation}
\end{lemma}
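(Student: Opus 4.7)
The plan is to derive \eqref{eq:q1} as a direct pointwise restriction of the already-established identity~\eqref{eq:qt}. That equation holds for every index tuple $(i_1,\ldots,i_d)$, with the error term $E$ controlled in the Chebyshev (uniform) norm by $(r_k+1)E_F$ and by $(r_k+1)^2 E_C$. Because these are uniform pointwise bounds, specializing some of the free slots to multi-indices drawn from the interpolation sets cannot violate either estimate.

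Concretely, I would fix arbitrary $i_{\leq k-1}\in\I^{\leq k-1}$ and $i_{>k+1}\in\I^{> k+1}$, keep $i_k,i_{k+1}$ as the remaining free parameters, and evaluate \eqref{eq:qt} at the tuple $(i_{\leq k-1},i_k,i_{k+1},i_{>k+1})$. The left-hand side becomes $A(\I^{\leq k-1},i_k,i_{k+1},\I^{> k+1})$. On the right, the factor $A(i_1,\ldots,i_k,\I^{>k}_{t_k})$ collapses to $A(\I^{\leq k-1},i_k,\I^{>k}_{t_k})$, and $A(\I^{\leq k}_{s_k},i_{k+1},\ldots,i_d)$ to $A(\I^{\leq k}_{s_k},i_{k+1},\I^{>k+1})$. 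The inverse $B^{[k]}_{t_k,s_k}$ carries no outer indices and is untouched, and the summation range $s_k,t_k=1,\ldots,r_k$ is unchanged, so the structure of the right-hand side matches~\eqref{eq:q1} exactly.

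The error estimate then transfers with no further work: since $|E(i_1,\ldots,i_d)|\leq |E|$ at every tuple, we immediately have $|E(\I^{\leq k-1},i_k,i_{k+1},\I^{>k+1})|\leq (r_k+1)E_F$ and $\leq (r_k+1)^2 E_C$, which are the two bounds claimed in \eqref{eq:q1}. I do not anticipate a substantive obstacle here; the only point requiring a little care is checking that the multi-index groupings $i_{\leq k}$ and $i_{>k}$ on the right of~\eqref{eq:qt} split cleanly into frozen and free parts when the outer slots are specialized to the interpolation sets, which is a bookkeeping matter rather than an analytic one. In spirit, the lemma simply records that the maximum-volume quasioptimality on the full tensor, established via \eqref{eq:qt}, automatically applies verbatim to the restricted block indexed by $(\I^{\leq k-1},\cdot,\cdot,\I^{>k+1})$ that will later be used to build up the next interpolation cross.
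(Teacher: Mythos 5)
Your argument is exactly the paper's: restrict the free indices $i_{\leq k-1}$ and $i_{>k+1}$ in~\eqref{eq:qt} to the interpolation subsets $\I^{\leq k-1}$ and $\I^{>k+1}$, and note that the Chebyshev-norm bound on $E$ transfers verbatim to the restricted subtensor. Correct, and no substantive difference from the paper's proof.
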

\begin{proof} In~\eqref{eq:qt} we reduce free indices $i_{\leq k-1}$ to the subset $\I^{\leq k-1}$ and similarly $i_{>k+1}$ to $\I^{>k+1}.$ \end{proof}

\begin{lemma}\label{lem2}
 If a tensor $A$ satisfies~\eqref{eq:aa}, and for some $1\leq p<k<q\leq d$ for subtensors 
 $$
 A_\lft=\left[A(\I^{\leq p-1}, i_{p:k},\I^{> k})\right], \qquad
 A_\rgt=\left[A(\I^{\leq k}, i_{k+1:q},\I^{> q})\right], 
 $$
 it holds $A_\lft=T_\lft + E_\lft$ and $A_\rgt=T_\rgt + E_\rgt$ with $|E_\lft|\leq \eps |A|$ and $|E_\rgt| \leq \eps |A|,$ then
 \begin{equation}\label{eq:q2}
   \begin{split}
    A(\I^{\leq p-1}, i_{p:q},\I^{> q}) 
       & = \sum_{s_k,t_k} T_\lft(\I^{\leq p-1}, i_{p:k},\I^{> k}_{t_k}) 
           B^{[k]}_{t_k,s_k} 
           T_\rgt(\I^{\leq k}_{s_k}, i_{k+1:q},\I^{> q})
      \\ & + \E(\I^{\leq p-1}, i_{p:q},\I^{> q}),
      \\
    \frac{|\E|}{|A|}  \leq (2 + \eps\kappa_k) \eps r_k  & + \frac{|E|}{|A|},   \qquad \kappa_k = r_k |A| |A_k^{-1}|, \quad A_k= \left[ A(\I^{\leq k},\I^{>k}) \right], 
   \end{split}
 \end{equation}
where $|E|$ is estimated by~\eqref{eq:q1}.
\end{lemma}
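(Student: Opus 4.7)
The plan is to reduce the statement to Lemma~\ref{lem1} plus a careful algebraic expansion. First, I apply exactly the reduction used in the proof of Lemma~\ref{lem1}, but to the longer free block $i_{p:q}$ in~\eqref{eq:qt}. This gives the exact identity
\begin{equation*}
A(\I^{\leq p-1}, i_{p:q}, \I^{> q}) = \sum_{s_k,t_k} A_\lft(\I^{\leq p-1}, i_{p:k}, \I^{> k}_{t_k})\, B^{[k]}_{t_k,s_k}\, A_\rgt(\I^{\leq k}_{s_k}, i_{k+1:q}, \I^{> q}) + E,
\end{equation*}
with $|E|$ controlled by the same estimates as in~\eqref{eq:q1}; no new analytic ingredient is needed for this step.

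Next I substitute $A_\lft = T_\lft + E_\lft$ and $A_\rgt = T_\rgt + E_\rgt$ on the right-hand side and expand. Writing $T_\bullet = A_\bullet - E_\bullet$ in each linear cross term allows me to collect the error as
\begin{equation*}
\E = \sum_{s_k,t_k}\bigl(A_\lft B^{[k]} E_\rgt + E_\lft B^{[k]} A_\rgt - E_\lft B^{[k]} E_\rgt\bigr) + E,
\end{equation*}
in which only the last summand is genuinely quadratic in the input errors. The point of this particular rearrangement is that the two linear summands still contain $A_\lft$ or $A_\rgt$ contracted with $B^{[k]}$, hence can be controlled by the maximum-volume dominance~\eqref{eq:dom}.

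Specifically, for every fixed left multiindex $i_{\leq k}$ and every $s_k$, the inner sum $\sum_{t_k} A(i_{\leq k}, \I^{> k}_{t_k})\, B^{[k]}_{t_k,s_k}$ has modulus at most $1$ by~\eqref{eq:dom}; multiplying by an entry of $E_\rgt$ (bounded entrywise by $\eps|A|$) and summing over the $r_k$ values of $s_k$ yields a bound of $r_k\eps|A|$, and the mirror argument handles $E_\lft B^{[k]} A_\rgt$. For the quadratic term no dominance inequality is available, so I fall back on the crude estimate $\sum_{s_k,t_k} |B^{[k]}_{t_k,s_k}| \leq r_k^2 |A_k^{-1}|$, which combined with $|E_\lft|,|E_\rgt|\leq\eps|A|$ gives $\eps^2 r_k^2 |A_k^{-1}|\,|A|^2 = \eps^2 r_k \kappa_k |A|$ once the definition $\kappa_k = r_k|A||A_k^{-1}|$ is used. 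Adding the three contributions and dividing by $|A|$ delivers exactly $(2+\eps\kappa_k)\eps r_k + |E|/|A|$.

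The main obstacle is bookkeeping rather than mathematics: one has to pair each of the two linear terms with the correct row or column version of~\eqref{eq:dom}, and keep the restricted index blocks $\I^{\leq p-1}$ and $\I^{> q}$ straight through the expansion. The identity that separates linear and quadratic error terms is the key step --- once it is written down, Lemma~\ref{lem1} contributes the $|E|$ summand verbatim and~\eqref{eq:dom} absorbs the two linear terms at the cost of the factor $r_k$, while the quadratic term alone is responsible for the $\eps\kappa_k$ correction.
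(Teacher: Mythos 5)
Your argument is correct and follows essentially the same route as the paper: you take the restricted subtensor of the exact interpolation identity, use the three-term algebraic rearrangement $A_\lft B^{[k]} A_\rgt = T_\lft B^{[k]} T_\rgt + A_\lft B^{[k]} E_\rgt + E_\lft B^{[k]} A_\rgt - E_\lft B^{[k]} E_\rgt$, bound the two linear terms by the maximum-volume dominance~\eqref{eq:dom}, and bound the quadratic term via the Chebyshev norm of $A_k^{-1}$. The bookkeeping, the choice of rearrangement (so that the linear terms carry $A_\lft$ or $A_\rgt$ rather than $T_\lft$ or $T_\rgt$), and the resulting coefficient all match the paper's proof.
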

\begin{proof}
Like in the previous lemma, by taking the subtensor in~\eqref{eq:qt} we obtain
\begin{equation}\nonumber
   \begin{split}
    A(\I^{\leq p-1}, i_{p:q},\I^{> q}) 
       & = \sum_{s_k,t_k}  A_\lft(\I^{\leq p-1}, i_{p:k},\I^{> k}_{t_k}) 
            B^{[k]}_{t_k,s_k} 
            A_\rgt(\I^{\leq k}_{s_k}, i_{k+1:q},\I^{> q})
      + E(\I^{\leq p-1}, i_{p:q},\I^{> q}),
   \end{split}
 \end{equation}
where $|E|$ is estimated by~\eqref{eq:q1}.
We have
\begin{equation}\nonumber
 \begin{split}
A_\lft B^{[k]} A_\rgt & = (T_\lft + E_\lft) B^{[k]} (T_\rgt + E_\rgt) 
 \\ & = T_\lft B^{[k]}  T_\rgt + A_\lft B^{[k]}  E_\rgt + E_\lft B^{[k]}  A_\rgt - E_\lft B^{[k]}  E_\rgt.
 \end{split}
\end{equation}
Since $A_k=\left[ A(\I^{\leq k},\I^{>k}) \right]$ is the maximum--volume submatrix in  $A^{\{k\}}=\left[A(i_{\leq k},i_{>k})\right],$ it dominates by~\eqref{eq:dom} in the corresponding rows and columns of the unfolding and \emph{a fortiori} in $A_\lft$ and $A_\rgt,$ i.e. $|A_\lft A_k^{-1}| \leq 1$ and $|A_k^{-1} A_\rgt| \leq 1.$
With $B^{[k]}=A_k^{-1}$ we have the following estimates
$$
 \begin{array}{c}
  |A_\lft B^{[k]} E_\rgt| \leq r_k |A_\lft B^{[k]} | \, |E_\rgt| \leq r_k \eps |A|,
  \qquad
  |E_\lft B^{[k]} A_\rgt|\leq r_k\eps |A|, \qquad \mbox{and}
  \\[1.1ex]
  | E_\lft B^{[k]} E_\rgt |  \leq r_k^2 \eps^2 | B^{[k]} | \, | A |^2 = r_k^2 \eps^2 | A_k^{-1} | \, | A |^2 = r_k \kappa_k \eps^2 |A|, 
 \end{array}
$$
which completes the proof.
\end{proof}

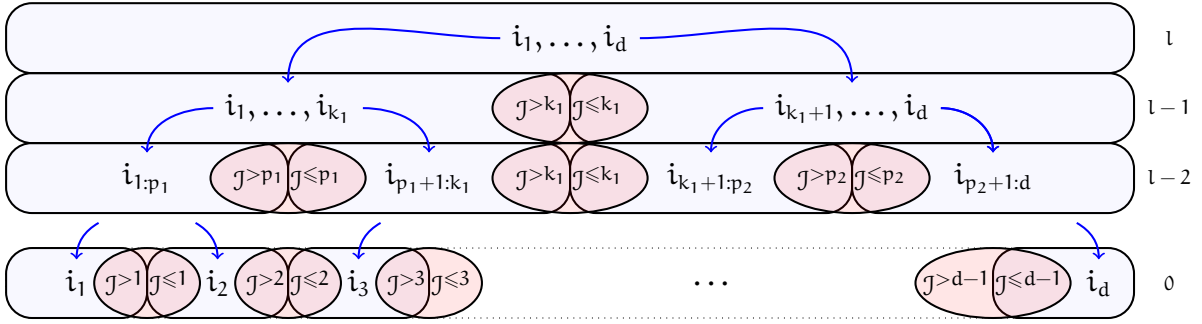
\begin{figure}[t]
 \begin{center}
  \resizebox{.99\textwidth}{!}{ \newdimen\TIKZx \TIKZx=.03\textwidth
 \newdimen\TIKZy \TIKZy=.03\textwidth
  \begin{tikzpicture}[x=\TIKZx,y=-\TIKZy]
   \tikzstyle{box}=[thick,thick,rounded corners=10,draw=black,fill=blue,fill opacity=.03];
   \tikzstyle{circ}=[thick,draw=black,fill=red,fill opacity=.10];
   
   \filldraw[box] (0,0) rectangle (32,2);
   \node (11) at (16,1) {$i_1,\ldots,i_d$};
   \node at (33,1) {\scriptsize $l$};
   
   \filldraw[box]   (0,2) rectangle (16,4);
   \filldraw[box]  (16,2) rectangle (32,4);
   \filldraw[circ] (16,3) circle[x radius=2.2\TIKZx, y radius=1.0\TIKZy];
   \node (21) at (8, 3) {$i_1,\ldots,i_{k_1}$}; 
   \node (22) at (24,3) {$i_{k_1+1},\ldots,i_d$};
   \node at (16,3) {\small $\I^{> k_1} \, \I^{\leq k_1} $};
   \node at (33,3) {\scriptsize $l-1$};
   
   \filldraw[box]   (0,4) rectangle ( 8,6);
   \filldraw[box]   (8,4) rectangle (16,6);
   \filldraw[box]  (16,4) rectangle (24,6);
   \filldraw[box]  (24,4) rectangle (32,6);
   \filldraw[circ] ( 8,5) circle[x radius=2.2\TIKZx, y radius=1.0\TIKZy];
   \filldraw[circ] (16,5) circle[x radius=2.2\TIKZx, y radius=1.0\TIKZy];
   \filldraw[circ] (24,5) circle[x radius=2.2\TIKZx, y radius=1.0\TIKZy];
   \node (31) at (4, 5) {$i_{1:p_1}$}; 
   \node (32) at (12,5) {$i_{p_1+1:k_1}$}; 
   \node (33) at (20,5) {$i_{k_1+1:p_2}$};
   \node (34) at (28,5) {$i_{p_2+1:d}$};
   \node at (8,5)  {\small $\I^{> p_1} \, \I^{\leq p_1} $};
   \node at (16,5) {\small $\I^{> k_1} \, \I^{\leq k_1} $};
   \node at (24,5) {\small $\I^{> p_2} \, \I^{\leq p_2} $};
   \node at (33,5) {\scriptsize $l-2$};
   
   \filldraw[box,thin,dotted,fill opacity=0.0]   (0,7) rectangle (32,9);
   \filldraw[box]   (0,7) rectangle ( 4,9);
   \filldraw[box]   (4,7) rectangle ( 8,9);
   \filldraw[box]   (8,7) rectangle (12,9);
   \filldraw[box]  (28,7) rectangle (32,9);
   \filldraw[circ] ( 4,8) circle[x radius=1.5\TIKZx, y radius=1.0\TIKZy];
   \filldraw[circ] ( 8,8) circle[x radius=1.5\TIKZx, y radius=1.0\TIKZy];
   \filldraw[circ] (12,8) circle[x radius=1.5\TIKZx, y radius=1.0\TIKZy];
   \filldraw[circ] (28,8) circle[x radius=2.2\TIKZx, y radius=1.0\TIKZy];
   \node at  (4,8)  {\small $\I^{>1} \, \I^{\leq1} $};
   \node at  (8,8)  {\small $\I^{>2} \, \I^{\leq2} $};
   \node at (12,8)  {\small $\I^{>3} \, \I^{\leq3} $};
   \node at (28,8)  {\small $\I^{>{d-1}} \, \I^{\leq d-1} $};
   \node (41) at (2,8)  {$i_1$};
   \node (42) at (6,8)  {$i_2$};
   \node (43) at (10,8) {$i_3$};
   \node (4d) at (31,8) {$i_d$};
   \node at (20,8) {$\ldots$};
   \node at (33,8) {\scriptsize $0$};
   
   \draw[->,blue,thick] (11.west)+(left:0.1mm)  to[out=left,in=up] (21.north);
   \draw[->,blue,thick] (11.east)+(right:0.1mm) to[out=right,in=up] (22.north);
   \draw[->,blue,thick] (21.west)+(left:0.1mm)  to[out=left,in=up] (31.north);
   \draw[->,blue,thick] (21.east)+(right:0.1mm) to[out=right,in=up] (32.north);
   \draw[->,blue,thick] (22.west)+(left:0.1mm)  to[out=left,in=up] (33.north);
   \draw[->,blue,thick] (22.east)+(right:0.1mm) to[out=right,in=up] (34.north);
   \draw[->,blue,thick] (22.east)+(right:0.1mm) to[out=right,in=up] (34.north);
   \draw[->,blue,thick] (41.north)+( 3mm,5mm) to[out=210,in=up]   (41.north);
   \draw[->,blue,thick] (42.north)+(-3mm,5mm) to[out=-30,in=up]   (42.north);
   \draw[->,blue,thick] (43.north)+( 3mm,5mm) to[out=210,in=up]   (43.north);
   \draw[->,blue,thick] (4d.north)+(-3mm,5mm) to[out=-30,in=up]   (4d.north);

  \end{tikzpicture}
 }
 \end{center}
\caption{Interpolation steps on the balanced dimension tree for Thm.~\ref{thm1}. 
        Rectangle boxes show the subtensors $\left[A(\I^{\leq p},i_{p:q},\I^{>q})\right]$ which are the building blocks of the decomposition. 
        Ellipses show the inverse matrices $B^{[k]}=A_k^{-1},$ $A_k=\left[A(\I^{\leq k},\I^{>k})\right],$ which do not carry free indices, but glue pairs of blocks together.}
\label{fig:t1}
\end{figure}

\begin{theorem}\label{thm1}
 If a tensor $A$ satisfies~\eqref{eq:aa}, and $E_F$ and/or $E_C$ are sufficiently small, then  $\tilde A$ given by~\eqref{eq:ii} with $\left[\I^{\leq k},\I^{>k}\right]=\maxvol A^{\{k\}}$ provides the accuracy
 \begin{equation}\label{eq1}
 \begin{split}
   |A-\tilde A| & \leq (2r+\kappa r+1)^{\lceil \log_2 d \rceil} (r+1)^{\phantom{2}} E_F, 
   \\
   |A-\tilde A| & \leq (2r+\kappa r+1)^{\lceil \log_2 d \rceil} (r+1)^2 E_C,
  \end{split}
 \end{equation}
 where $r=\max r_k,$ $\kappa=\max \kappa_k.$
 By `sufficiently small' we mean such values of $E_F$ and/or $E_C$ that the corresponding estimate provides $|A-\tilde A|/|A| < 1.$ 
 \end{theorem}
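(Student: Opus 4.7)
The plan is to induct bottom-up along the balanced binary dimension tree of Fig.~\ref{fig:t1}, in which leaves (level $0$) correspond to the single-index slices $A(\I^{\leq k-1},i_k,\I^{>k})$ and the root (level $L := \lceil \log_2 d \rceil$) corresponds to the full tensor $A$. At each tree node of level $l$ covering a contiguous range $p+1,\ldots,q$ of mode indices, I will construct an approximation $T^{(p,q)}$ of the subtensor $\bigl[A(\I^{\leq p},i_{p+1:q},\I^{>q})\bigr]$ and bound its Chebyshev error by $\eps_l\,|A|$, uniformly over all nodes at level $l$.

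First I set the base case $T^{(k-1,k)} := A(\I^{\leq k-1},i_k,\I^{>k})$; this is an exact slice of $A$, hence $\eps_0 = 0$. For the inductive step, at an internal node covering $p+1:q$ I choose a split at some $k$ in the middle and define
\[
 T^{(p,q)}(\I^{\leq p}, i_{p+1:q}, \I^{>q}) := \sum_{s_k,t_k} T^{(p,k)}(\I^{\leq p}, i_{p+1:k}, \I^{>k}_{t_k})\, B^{[k]}_{t_k,s_k}\, T^{(k,q)}(\I^{\leq k}_{s_k}, i_{k+1:q}, \I^{>q}),
\]
which is precisely the building block of the cross formula~\eqref{eq:ii}. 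Applying Lemma~\ref{lem2} with the induction hypothesis $\eps = \eps_{l-1}$, combined with the Lemma~\ref{lem1} bound on the intrinsic splitting error $|E|$, yields the recurrence
\[
 \eps_l \leq (2+\eps_{l-1}\kappa)\,r\,\eps_{l-1} + \eta, \qquad \eta := (r+1)E_F/|A| \ \text{(resp.\ } (r+1)^2 E_C/|A|\text{)}.
\]

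Next I unroll the recurrence. Under the self-consistency hypothesis $\eps_{l-1}\leq 1$ --- which is exactly what ``sufficiently small'' means in the theorem --- the recurrence simplifies to $\eps_l \leq (2r+\kappa r)\eps_{l-1} + \eta$. Setting $\alpha := 2r+\kappa r + 1$, a routine induction on $l$ gives $\eps_l \leq \alpha^l \eta$: the inductive step reduces to $\alpha^{l-1}(2r+\kappa r)\eta + \eta \leq \alpha^l \eta$, which holds as soon as $\alpha^{l-1}\geq 1$. Evaluating at the root gives $|A-\tilde A| = \eps_L |A| \leq (2r+\kappa r+1)^L (r+1) E_F$, and the $E_C$ version is identical with the replacement of $\eta$.

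The main obstacle I anticipate is the bookkeeping when $d$ is not a power of $2$: the tree cannot be perfectly balanced, and one has to verify that trivial children (associated with an empty index range) act as the identity in the combination step and thus do not contribute to the error. As long as the overall height is kept at $\lceil \log_2 d\rceil$, the number of Lemma~\ref{lem2} invocations on any root-to-leaf path stays at most $L$, so the exponent in the final bound is unaffected. A small secondary point is the apparent circularity of the hypothesis $\eps_{l-1}\leq 1$: this is resolved by noting that the ``sufficiently small'' condition $\alpha^L \eta < 1$ forces $\alpha^l \eta \leq 1$ for every $l\leq L$ (since $\alpha\geq 1$), so the bound $\eps_l\leq \alpha^l \eta$ and the validity of its own hypothesis are established simultaneously by the induction.
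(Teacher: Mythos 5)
Your proposal is correct and follows essentially the same route as the paper's proof: induction up the balanced dimension tree, using Lemma~\ref{lem1} for the intrinsic splitting error and Lemma~\ref{lem2} for the error amplification, with the ``sufficiently small'' hypothesis resolving the apparent circularity in the $\eps_{l-1}\leq 1$ step. The only cosmetic difference is in unrolling the recurrence: you show $\eps_l \leq (\alpha-1)\eps_{l-1} + \eta \Rightarrow \eps_l \leq \alpha^l\eta$ with the root at level $L=\lceil\log_2 d\rceil$, whereas the paper uses $\eps_1 \leq \eps_m$ to write $\eps_{m+1}\leq\alpha\eps_m$, giving $\eps_l\leq\alpha^{l-1}\eps_1$ with a slightly looser level count $l\leq\lceil\log_2 d\rceil+1$ — the two choices offset and yield the same final exponent.
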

\begin{proof}
We will use the \emph{dimension tree} suggested in~\cite{ot-tt-2009}, see Fig.~\ref{fig:t1}. 
The interpolation step~\eqref{eq:qt} splits a given group of indices $i_p,\ldots,i_q$ in two parts $i_p,\ldots,i_k$ and $i_{k+1},\ldots,i_q,$ and introduces the auxiliary summation over the sets $\I^{\leq k}$ and $\I^{> k}$ at the point of splitting.  
No more than two auxiliary sets appear in each subtensor when the decomposition goes from the whole tensor down to leaves $\left[A(\I^{\leq k-1},i_k,\I^{>k})\right],$ which constitute~\eqref{eq:ii}. 
Leaves consist of the original entries of $A,$ therefore we have zero error at the ground level.
The interpolation error at the level $1$ is estimated by~\eqref{eq:q1} as follows
$$
\eps_1 = \frac{|E|}{|A|} = \min\left\{(r+1)\frac{E_F}{|A|}, \, (r+1)^2 \frac{E_C}{|A|} \right\}.
$$
When we move up by one level of the dimension tree, the error is amplified as shown by~\eqref{eq:q2}, and the relative error in Chebyshev norm propagates as follows
$$
\eps_{m+1}=(2+\eps_m \kappa) \eps_m r + \eps_1 \leq (2+\kappa)\eps_m r + \eps_m = (2r+\kappa r+1)\eps_m.  
$$
Here we use the inequality $\eps_m < 1$ provided by the assumption that $E_F$ and $E_C$ are sufficiently small.
Clearly, $\eps_l=(2r+\kappa r+1)^{l-1}\eps_1.$ 
For the balanced tree $2^l \leq 2d$ and $l \leq \lceil \log_2 d \rceil + 1,$ which completes the proof.
\end{proof}
\begin{remark}
 We are tempted to call $\kappa_k=r_k |A| |A_k^{-1}|$ the \emph{condition number} of the submatrix $A_k$ w.r.t. the Chebyshev norm. 
 Technically this is not correct, since in general $|A| \geq |A_k|$ and $\kappa_k \geq r_k |A_k| |A_k^{-1}| = \kappa_C(A_k).$
 However, in~\cite{gostz-maxvol-2010} it is shown that the ratio of the Chebyshev norms of a matrix and its maximum--volume submatrix is bounded as follows $|A|/|A_k| \leq 2r_k^2+r_k$ and often does not grow with rank.
 Therefore, $\kappa_k \leq (2r_k^2+r_k)\kappa_C(A_k),$ and usually $\kappa_k \simeq \kappa_C(A_k).$
 The similar formula with spectral norms appears in the pioneering paper on the cross interpolation~\cite[Eq. $(1.5)$]{gtz-psa-1997}.
\end{remark}

The splitting of indices in the balanced dimension tree was used to estimate the accuracy of the interpolation in the HT format~\cite{lars-htcross-2013}. 
The upper bound for the quasioptimality constant in the HT format is $\O(r^{2d}),$ where $r$ is the maximum representation rank.
Note that the upper bounds in~\eqref{eq1} do not \emph{necessarily} grow exponentially with~$d.$ 
More strict statement is possible if $\kappa$ remains bounded or grows moderately with $d$ as well, which certainly depends on the properties of a \emph{sequence} of $d$--tensors considered for $d=1,2,3,\ldots$
Such rigorous analysis is very important, but is beyond the scope of this paper.

The result of Thm.~\ref{thm1} can be interpreted as the \emph{existence} of a sufficiently good TT approximation computed from a few entries of a tensor by formula~\eqref{eq:ii}, provided that the accurate representation in the TT format~\eqref{eq:tt} is possible.
The coefficient $\O(r^{\lceil \log_2d \rceil + 2})$ can be also understood as upper bound for the ratio of the accuracy of the~\emph{best cross} interpolation~\eqref{eq:ii} and the \emph{best possible} accuracy of the approximation~\eqref{eq:tt} of the same TT--ranks.
Thm.~\ref{thm1} is \emph{constructive} and prescribes the choice of the interpolation sets $\I^{\leq k}, \I^{>k}$ to achieve the quasioptimal accuracy.
However, the actual~\emph{computation} of maximum--volume sets in unfoldings $A^{\{k\}}$ is impossible due to their restrictively large sizes.
In the next sections we consider the nested choice of the interpolation sets which reduces the search space.

\section{Nested maximum volume indices} \label{sec:emb}
In this section we switch to the ultimately unbalanced dimension tree, which splits indices one-by-one, see Fig.~\ref{fig:t2}. 
In~\cite{osel-tt-2011} this tree has been used to develop the TT--SVD algorithm which approximates a given $d$--tensor by the TT format.
We apply the same algorithm substituting the SVD approximation steps by the interpolation. 
As in the previous section, we estimate the accuracy of the resulted approximation w.r.t. the best possible approximation of the same TT--ranks.

Given a tensor $A=\left[A(i_1,\ldots,i_d)\right]$ that is approximated by the tensor train~\eqref{eq:aa}, we apply the interpolation formula~\eqref{eq:qt} and separate the rightmost index from the others as follows
\begin{equation}\nonumber
 A(i_1,\ldots,i_d)  = \sum_{\substack{s_{d-1}\\t_{d-1}}} A(i_{\leq d-1},\I^{> d-1}_{t_{d-1}}) 
          B^{[d-1]}_{t_{d-1},s_{d-1}}
          A(\I^{\leq d-1}_{s_{d-1}},i_d) 
   + E_{d-1}(i_1,\ldots,i_d),
\end{equation}
 where $\left[\I^{\leq d-1},\I^{>d-1}\right] = \maxvol \left[A(i_{\leq d-1},i_d)\right].$ 
Then we interpolate the subtensor with $d-1$ free indices and separate the rightmost free index as follows
\begin{equation}\nonumber
 \begin{split}
   A(i_{\leq d-1},\I^{> d-1}) & = \sum_{\substack{{s_{d-2}}\\{t_{d-2}}}}
     A(i_{\leq d-2},\I^{> d-2}_{t_{d-2}}) 
     B^{[d-2]}_{t_{d-2},s_{d-2}}
     A(\I^{\leq d-2}_{s_{d-2}},i_{d-1},\I^{>d-1}) 
    + E_{d-2}(i_{\leq d-1},\I^{> d-1}), 
 \end{split} 
\end{equation}
where $\left[\I^{\leq d-2},\I^{>d-2}\right] = \maxvol \left[A(i_{\leq d-2},i_{d-1}\I^{>d-1})\right].$
The elements of $\I^{>d-2}$ are now chosen not from all possible values of bi-index $\overline{i_{d-1}i_d}$ but from the reduced set $\overline{i_{d-1}\I^{>d-1}},$ where index $i_d$ is restricted to $r_{d-1}$ elements of $\I^{>d-1}.$
Hereinafter we omit the overline for the sake of clarity, since the use of comma in the pointwise notation is sufficient to show which indices are grouped together.
The maximum--volume subsets $\I^{>d-1}$ and $\I^{>d-2}$ are \emph{right--nested} (cf.~\cite{ot-ttcross-2010}) which means that $i_{>d-2} \in \I^{>d-2}$ leads to $i_{>d-1} \in \I^{>d-1}.$
As the interpolation develops further, it holds
\begin{equation}\label{eq:emb}
 i_{>k} \in \I^{>k} \:\Rightarrow\: 
 i_{>k+1} \in \I^{>k+1},
 \qquad k=d-1,\ldots,1.
\end{equation}

\begin{figure}[t]
 \begin{center}
  \resizebox{.99\textwidth}{!}{ \newdimen\TIKZx \TIKZx=.03\textwidth
 \newdimen\TIKZy \TIKZy=.03\textwidth
  \begin{tikzpicture}[x=\TIKZx,y=-\TIKZy]
   \tikzstyle{box}=[thick,thick,rounded corners=10,draw=black,fill=blue,fill opacity=.03];
   \tikzstyle{circ}=[thick,draw=black,fill=red,fill opacity=.10];
   
   \filldraw[box] (0,0) rectangle (32,2);
   \node (11) at (24,1) {$i_1,\ldots,i_d$};
   \node at (33,1) {\scriptsize $d-1$};
   
   \filldraw[box]   (0,2) rectangle (28,4);
   \filldraw[box]  (28,2) rectangle (32,4);
   \filldraw[circ] (28,3) circle[x radius=2.2\TIKZx, y radius=1.0\TIKZy];
   \node at (28,3) {\small $\I^{> d-1} \, \I^{\leq d-1} $};
   \node (21) at (20,3) {$i_1,\ldots,i_{d-1}$}; 
   \node (22) at (31,3) {$i_{d}$};
   \node at (33,3) {\scriptsize $d-2$};
   
   \filldraw[box]  (0, 4) rectangle (22,6);
   \filldraw[box]  (22,4) rectangle (28,6);
   \filldraw[box]  (28,4) rectangle (32,6);
   \filldraw[circ] (22,5) circle[x radius=2.2\TIKZx, y radius=1.0\TIKZy];
   \filldraw[circ] (28,5) circle[x radius=2.2\TIKZx, y radius=1.0\TIKZy];
   \node at (22,5) {\small $\I^{> d-2} \, \I^{\leq d-2} $};
   \node at (28,5) {\small $\I^{> d-1} \, \I^{\leq d-1} $};
   \node (31) at (16,5) {$i_1,\ldots,i_{d-2}$}; 
   \node (32) at (25,5) {$i_{d-1}$};
   \node (33) at (31,5) {$i_{d}$};
   \node at (33,5) {\scriptsize $d-3$};
   
   \filldraw[box,thin,dotted,fill opacity=0.0]   (0,7) rectangle (32,9);
   \filldraw[box]   (0,7) rectangle ( 4,9);
   \filldraw[box]   (4,7) rectangle ( 8,9);
   \filldraw[box]   (8,7) rectangle (12,9);
   \filldraw[box]  (28,7) rectangle (32,9);
   \filldraw[circ] ( 4,8) circle[x radius=1.5\TIKZx, y radius=1.0\TIKZy];
   \filldraw[circ] ( 8,8) circle[x radius=1.5\TIKZx, y radius=1.0\TIKZy];
   \filldraw[circ] (12,8) circle[x radius=1.5\TIKZx, y radius=1.0\TIKZy];
   \filldraw[circ] (28,8) circle[x radius=2.2\TIKZx, y radius=1.0\TIKZy];
   \node at  (4,8)  {\small $\I^{>1} \, \I^{\leq1} $};
   \node at  (8,8)  {\small $\I^{>2} \, \I^{\leq2} $};
   \node at (12,8)  {\small $\I^{>3} \, \I^{\leq3} $};
   \node at (28,8)  {\small $\I^{>{d-1}} \, \I^{\leq d-1} $};
   \node (41) at (2,8)  {$i_1$};
   \node (42) at (6,8)  {$i_2$};
   \node (43) at (10,8) {$i_3$};
   \node (4d) at (31,8) {$i_d$};
   \node at (20,8) {$\ldots$};
   \node at (33,8) {\scriptsize $0$};
   
   \draw[->,blue,thick] (11.west)+(left:0.1mm) to[out=left,in=up] (21.north);
   \draw[->,blue,thick] (11.east)+(right:0.1mm) to[out=right,in=up] (22.north);
   \draw[->,blue,thick] (21.west)+(left:0.1mm) to[out=left,in=up] (31.north);
   \draw[->,blue,thick] (21.east)+(right:0.1mm) to[out=right,in=up] (32.north);
   \draw[->,blue,thick] (22.south)+(down:0.1mm) to[out=down,in=up] (33.north);
   \draw[->,blue,thick] (41.north)+( 3mm,5mm) to[out=210,in=up]   (41.north);
   \draw[->,blue,thick] (42.north)+(-3mm,5mm) to[out=-30,in=up]   (42.north);
   \draw[->,blue,thick] (43.north)+( 0mm,5mm) to[out=down,in=up]  (43.north);
   \draw[->,blue,thick] (4d.north)+( 0mm,5mm) to[out=down,in=up]  (4d.north);

  \end{tikzpicture}
 }
 \end{center}
\caption{Interpolation steps on the unbalanced dimension tree for Thm.~\ref{thm2}, cf. Fig.~\ref{fig:t1}.}
\label{fig:t2}
\end{figure}

\begin{theorem}\label{thm2}
 If a tensor $A$ satisfies~\eqref{eq:aa}, then  $\tilde A$ given by~\eqref{eq:ii} with
 $$
 \left[\I^{\leq k},\I^{>k}\right]=\maxvol \left[A(i_{\leq k},i_{k+1}\I^{>k+1})\right], \qquad k=d-1,\ldots,1,
 $$
 provides the following accuracy
 \begin{equation}\label{eq2}
 \begin{split}
   |A-\tilde A| & \leq \frac{r^{d-1}-1}{r-1} (r+1)^{\phantom{2}} E_F, 
   \\
   |A-\tilde A| & \leq \frac{r^{d-1}-1}{r-1} (r+1)^{2} E_C.
  \end{split}
 \end{equation}
\end{theorem}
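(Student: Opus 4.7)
The plan is to build $\tilde A$ recursively along the unbalanced dimension tree of Fig.~\ref{fig:t2} and propagate the error from the leaves up to the root. The key structural feature is that at every node only the left factor is itself a recursive approximation, while the right factor is formed from exact entries of $A$. I expect this one-sided asymmetry to eliminate the quadratic term $\eps\kappa$ of Lemma~\ref{lem2} and leave only a multiplicative factor $r$ per level, producing a geometric rather than an exponential-in-$\log d$ amplification.

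To set up the induction I would introduce the restricted tensors $\phi_k(i_{\leq k},\I^{>k}):=A(i_{\leq k},\I^{>k})$ (with $\phi_d=A$) and the nested approximants
\begin{equation*}
\tilde\phi_1:=\phi_1, \qquad \tilde\phi_{k+1}(i_{\leq k+1},\I^{>k+1}):=\sum_{s_k,t_k}\tilde\phi_k(i_{\leq k},\I^{>k}_{t_k})\,B^{[k]}_{t_k,s_k}\,A(\I^{\leq k}_{s_k},i_{k+1},\I^{>k+1}),
\end{equation*}
so that $\tilde A=\tilde\phi_d$. The per-step residual $E_k$, defined by $\phi_{k+1}=\sum_{s_k,t_k}\phi_k(\cdot,\I^{>k}_{t_k})B^{[k]}_{t_k,s_k}A(\I^{\leq k}_{s_k},\cdot,\I^{>k+1})+E_k$, is exactly the matrix skeleton interpolation error for $M_k:=[A(i_{\leq k},i_{k+1}\I^{>k+1})]$ under the choice $[\I^{\leq k},\I^{>k}]=\maxvol M_k$. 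Applying \eqref{eq:qm} to $M_k$ and bounding its best rank-$r$ approximant by the restriction of the best TT approximation $X$ to the columns of $M_k$ (still of rank $\le r$, with global Frobenius/Chebyshev error $\le E_F$/$E_C$) yields $|E_k|\leq(r+1)E_F$ and $|E_k|\leq(r+1)^2 E_C$ respectively.

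Setting $\delta_k:=\tilde\phi_k-\phi_k$ and subtracting the two recursions gives
\begin{equation*}
\delta_{k+1}(i_{\leq k+1},\I^{>k+1})=\sum_{s_k,t_k}\delta_k(i_{\leq k},\I^{>k}_{t_k})\,B^{[k]}_{t_k,s_k}\,A(\I^{\leq k}_{s_k},i_{k+1},\I^{>k+1})-E_k.
\end{equation*}
The dominance property~\eqref{eq:dom} for $\maxvol M_k$ then bounds each scalar $\sum_{s_k}B^{[k]}_{t_k,s_k}A(\I^{\leq k}_{s_k},i_{k+1},j)$ by $1$ in modulus for every column $(i_{k+1},j)$ of $M_k$ with $j\in\I^{>k+1}$; summing over the $r$ values of $t_k$ delivers the elementary recurrence $|\delta_{k+1}|\leq r|\delta_k|+(r+1)E_F$. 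Since $\delta_1=0$, telescoping gives $|\delta_d|\leq(r+1)E_F\sum_{j=0}^{d-2}r^j=\frac{r^{d-1}-1}{r-1}(r+1)E_F$, and the Chebyshev version is identical with $(r+1)^2E_C$ in place of $(r+1)E_F$.

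The main obstacle I foresee is justifying the per-step bound $|E_k|\leq(r+1)E_F$ in the nested setting, since Lemma~\ref{lem1} applies maxvol to the full unfolding $A^{\{k\}}$, whereas here maxvol is taken on the narrower matrix $M_k$ whose column index is restricted to $i_{k+1}\I^{>k+1}$. This reduces to the remark that deleting columns cannot increase the best rank-$r$ approximation error in either the spectral or Chebyshev norm, and that the interpolation formula remains exact on its own cross by~\eqref{eq:ip}, so the per-step quasi-optimality~\eqref{eq:qm} transfers to $M_k$ verbatim.
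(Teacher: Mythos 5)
Your proposal is correct and follows essentially the same inductive argument as the paper: you telescope along the unbalanced dimension tree, bound each per-step skeleton error via~\eqref{eq:qm} applied to the restricted matrix $M_k$, and use the dominance property~\eqref{eq:dom} to obtain $|\delta_{k+1}|\leq r|\delta_k|+|E_k|$, which telescopes to the geometric sum $\frac{r^{d-1}-1}{r-1}$. Your explicit observation that deleting columns cannot increase the best rank-$r$ error (needed to transfer~\eqref{eq:qm} from $A^{\{k\}}$ to $M_k$) is a point the paper leaves implicit, but the substance is identical.
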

\begin{proof}
At the first level of the dimension tree the interpolation writes as follows
\begin{equation}\nonumber
A(i_1,i_2\I^{>2}) = \sum_{s_1,t_1} A(i_1,\I^{>1}_{t_1}) 
                 B^{[1]}_{t_1,s_1}
                 A(\I^{\leq1}_{s_1},i_2\I^{>2}) + E_1(i_1,i_2\I^{>2}),
\end{equation}
and since $\left[\I^{\leq1},\I^{>1}\right]=\maxvol\left[A(i_1,i_2\I^{>2})\right],$ it holds
\begin{equation}\nonumber
|E_1| \leq (r_1+1)^{\phantom{2}} E_F, \qquad |E_1| \leq (r_1+1)^2 E_C,
\end{equation}
which proves the statement of the theorem for $d=2.$
Suppose at the level $k$ of the tree it holds
\begin{equation}\nonumber
 \begin{split}
  A(i_{\leq k},\I^{>k}) & = \sum_{\substack{s_1\ldots s_{k-1}\\t_1\ldots t_{k-1}}}
   A(i_1,\I^{>1}_{t_1}) 
   B^{[1]}_{t_{1},s_{1}}
   \ldots 
   B^{[k-1]}_{t_{k-1},s_{k-1}}
   A(\I^{\leq k-1}_{s_{k-1}},i_k,\I^{>k})
 + \E_k(i_{\leq k},\I^{>k}), 
 \end{split}
\end{equation}
where  $|\E_k|\leq\frac{r^{k-1}-1}{r-1} |E_1|.$
Interpolation at the next level writes as follows
\begin{equation}\nonumber
 \begin{split}
  A(i_{\leq k},i_{k+1}\I^{>k+1}) 
     & = \sum_{s_k,t_k} A(i_{\leq k},\I^{>k}_{t_k}) 
     B^{[k]}_{t_{k},s_{k}}
     A(\I^{\leq k}_{s_k},i_{k+1}\I^{>k+1})
    + E_k(i_{\leq k+1},\I^{>k+1}).
 \end{split}  
\end{equation}
Using the previous equation we obtain
\begin{equation}\nonumber
 \begin{split}
  A(i_{\leq k+1},\I^{>k+1}) & = \sum_{\substack{s_1\ldots s_k\\t_1\ldots t_k}}
  A(i_1,\I^{>1}_{t_1}) 
   B^{[1]}_{t_{1},s_{1}}
  \ldots 
  B^{[k]}_{t_{k},s_{k}}
  A(\I^{\leq k}_{s_k},i_{k+1},\I^{>k+1}) 
 \\  
 & + \underbrace{\sum_{s_k,t_k} \E_k(i_{\leq k},\I^{>k}_{t_k}) 
  B^{[k]}_{t_{k},s_{k}}
 A(\I^{\leq k}_{s_k},i_{k+1},\I^{>k+1}) 
  + E_k(i_{\leq k+1},\I^{>k+1})}_{\E_{k+1}(i_{\leq k+1},\I^{>k+1})}.
 \end{split}
\end{equation}
Since $\I^{\leq k},\I^{>k}$ are chosen by the maximum--volume principle, we have
\begin{equation}\nonumber
 |E_k| \leq (r_1+1)^{\phantom{2}} E_F, \quad |E_k| \leq (r_1+1)^2 E_C, \qquad
 \left| \sum_{s_k} B^{[k]}_{t_k,s_k} A(\I^{\leq k}_{s_k},i_{k+1}\I^{>k+1}) \right| \leq 1,
\end{equation}
and it follows that
\begin{equation}\nonumber
 |\E_{k+1}| \leq r |\E_k| + |E_k| \leq r \frac{r^{k-1}-1}{r-1} |E_1| + |E_1| = \frac{r^k-1}{r-1} |E_1|.
\end{equation}
Substitution $k=d-1$ completes the proof.
\end{proof}

\begin{lemma}\label{lemi}
 If $\tilde A$ is given by~\eqref{eq:ii} and the interpolation sets are right--nested as shown by~\eqref{eq:emb}, then for all $k=1,\ldots,d-1$ it holds
 $$
 \tilde A(i_1,\ldots,i_k,\I^{>k}) = \sum_{\substack{s_1\ldots s_{k-1}\\t_1\ldots t_{k-1}}}
  A(i_1,\I^{>1}_{t_1})
  B^{[1]}_{t_1,s_1}
  \ldots 
  B^{[k-1]}_{t_{k-1},s_{k-1}}
  A(\I^{\leq k-1}_{s_{k-1}},i_k,\I^{>k}).
 $$
\end{lemma}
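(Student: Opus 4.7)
The approach is to work right-to-left along the chain in \eqref{eq:ii}, repeatedly collapsing the tail to a standard basis vector by combining the right-nesting \eqref{eq:emb} with the identity $A_k^{-1} A_k = I$.

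First I would recast \eqref{eq:ii} in matrix-product form. Let $M_k(i_k)$ denote the $r_{k-1} \times r_k$ matrix with entries $(M_k(i_k))_{s_{k-1}, t_k} = A(\I^{\leq k-1}_{s_{k-1}}, i_k, \I^{>k}_{t_k})$, using the border conventions $r_0 = r_d = 1$ and $\I^{\leq 0} = \I^{>d} = \emptyset$. Then
$$
\tilde A(i_1,\ldots,i_d) = M_1(i_1)\, B^{[1]}\, M_2(i_2)\, B^{[2]}\, \cdots\, B^{[d-1]}\, M_d(i_d),
$$
with $A_k$ having entries $(A_k)_{s_k, t_k} = A(\I^{\leq k}_{s_k}, \I^{>k}_{t_k})$ so that $B^{[k]} A_k = I$.

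The heart of the proof is the following inductive claim, proved by reverse induction on $m$ from $m = d-1$ down to $m = k$: if $(i_{m+1},\ldots,i_d) = \I^{>m}_{t^*_m}$, then
$$
B^{[m]}\, M_{m+1}(i_{m+1})\, B^{[m+1]}\, \cdots\, B^{[d-1]}\, M_d(i_d) = e_{t^*_m}.
$$
For the base case $m = d-1$, the assumption $i_d = \I^{>d-1}_{t^*_{d-1}}$ makes $M_d(i_d)$ the $t^*_{d-1}$-th column of $A_{d-1}$, and multiplying by $B^{[d-1]}$ gives $e_{t^*_{d-1}}$. For the inductive step from $m$ to $m-1$, the nesting \eqref{eq:emb} applied to $(i_m,\ldots,i_d) = \I^{>m-1}_{t^*_{m-1}}$ guarantees $(i_{m+1},\ldots,i_d) \in \I^{>m}$, hence equals some $\I^{>m}_{t^*_m}$; the inductive hypothesis collapses the inner tail to $e_{t^*_m}$, and $M_m(i_m) e_{t^*_m}$ has entries $A(\I^{\leq m-1}_{s_{m-1}}, i_m, \I^{>m}_{t^*_m}) = A(\I^{\leq m-1}_{s_{m-1}}, \I^{>m-1}_{t^*_{m-1}}) = (A_{m-1})_{s_{m-1}, t^*_{m-1}}$, so multiplying by $B^{[m-1]}$ gives $e_{t^*_{m-1}}$.

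Applying this claim at $m = k$ to the full product collapses everything to the right of $M_k(i_k)$ into $e_{t^*_k}$, leaving $M_k(i_k) e_{t^*_k}$, which is precisely the column $A(\I^{\leq k-1}_{s_{k-1}}, i_k, \I^{>k}_{t^*_k})$; summing over the remaining indices $s_1,\ldots,s_{k-1},t_1,\ldots,t_{k-1}$ in the untouched left portion of the chain gives exactly the RHS of the lemma. The argument is essentially bookkeeping, so the main obstacle I foresee is purely notational: tracking how the nesting indices $t^*_k, t^*_{k+1}, \ldots, t^*_{d-1}$ are produced consistently from the single choice of $\I^{>k}_{t^*_k}$, and keeping the row/column conventions for $A_k$ and $B^{[k]}$ aligned with the matrix product. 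The underlying algebraic content is just $A_k^{-1} A_k = I$ applied $d-k$ times.
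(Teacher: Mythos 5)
Your proof is correct and follows essentially the same route as the paper's: a downward induction along the chain that uses right-nesting to turn the rightmost remaining core into a column of $A_p$, which then cancels against $B^{[p]}=A_p^{-1}$. Your use of standard basis vectors $e_{t^*_m}$ is just a more explicit, matrix-level phrasing of the paper's "reduces to $A_p$ and cancels out with $B^{[p]}$" step.
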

\begin{proof}
 To prove the statement of the lemma for $k=d-1,$ in~\eqref{eq:ii} we restrict $i_{>d-1}$ to $\I^{>d-1}.$ 
 The last core reduces to 
 $$
 \left[A(\I^{\leq d-1},i_d)\right]_{i_d\in\I^{>d-1}} = \left[A(\I^{\leq d-1},\I^{>d-1})\right] = A_{d-1},
 $$
 and cancels out with the neighboring matrix $B^{[d-1]}.$

 Suppose that the statement holds for $k=p+1,$ i.e.
 $$
 \tilde A(i_1,\ldots,i_{p+1},\I^{>p+1}) = \sum_{\substack{s_1\ldots s_{p}\\t_1\ldots t_{p}}}
 A(i_1,\I^{>1}_{t_1}) 
  B^{[1]}_{t_1,s_1}
 \ldots 
  B^{[p]}_{t_{p},s_{p}}
 A(\I^{\leq p}_{s_p},i_{p+1},\I^{>p+1}).
 $$
 Consider this equation for $i_{>p}\in\I^{>p},$ that by~\eqref{eq:emb} assumes $i_{>p+1}\in\I^{>p+1}.$ 
 The rightmost core reduces as follows 
 $$
 \left[A(\I^{\leq p},i_{p+1},\I^{>p+1})\right]_{i_{>p}\in\I^{>p}} = \left[A(\I^{\leq p},\I^{>p})\right] = A_p,
 $$
 and cancels out with $B^{[p]}.$ 
 This proves the statement for $k=p$ and the lemma by recursion. 
\end{proof}

Since $\left[\I^{\leq k},\I^{>k}\right]=\maxvol\left[A(i_{\leq k},i_{k+1}\I^{>k+1})\right],$ the quasioptimal estimate~\eqref{eq:qt} holds for the entries of this subtensor only.
However, $A_k=\left[A(\I^{\leq k},\I^{>k})\right]$ is nonsingular and we can interpolate the whole unfolding $A^{\{k\}}$ by the cross based on $A_k$ with some (presumably worse) accuracy estimate
\begin{equation}\label{eq:ihat}
  A(\i)  = \sum_{s_k,t_k} A(i_1,\ldots,i_k,\I^{>k}_{t_k}) 
            B^{[k]}_{t_k,s_k}
           A(\I^{\leq k}_{s_k},i_{k+1},\ldots,i_d) 
   + \hat E_k(\i).
\end{equation}
The following theorem estimates the accuracy of the same interpolation $\tilde A$ as in the previous theorem w.r.t. the errors $|\hat E_k|$ in~\eqref{eq:ihat}.

\begin{theorem}\label{thm3}
Under the conditions of Thm.~\ref{thm2} assume additionally that the interpolation~\eqref{eq:ihat} provides sufficiently good accuracy $\hat\eps = \max |\hat E_k|/|A|.$ 
Then 
 \begin{equation}\label{eq3}
   |A-\tilde A|  \leq \frac{dr\hat\eps}{1-d\kappa r \hat\eps} |A|,
 \end{equation}
where $\kappa$ is defined in~\eqref{eq1}.
By `sufficiently small' here we mean such $\hat\eps,$ that the denominator of~\eqref{eq3} does not approach zero.
\end{theorem}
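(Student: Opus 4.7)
The plan is to telescope $A-\tilde A$ through $d-1$ partial interpolations, bound each step by $r\hat\eps|A| + r\kappa\hat\eps|A-\tilde A|$ using the combination of row dominance and the entrywise estimate $|A_k^{-1}|=\kappa_k/(r_k|A|)$, and then close the resulting fixed--point inequality for $|A-\tilde A|$. For $k=0,1,\ldots,d-1$ introduce the partial interpolation
\begin{equation*}
 \tilde A^{(k)}(\i) = \sum_{\substack{s_1,\ldots,s_k\\t_1,\ldots,t_k}} A(i_1,\I^{>1}_{t_1})\, B^{[1]}_{t_1,s_1} \cdots B^{[k]}_{t_k,s_k}\, A(\I^{\leq k}_{s_k}, i_{k+1},\ldots,i_d),
\end{equation*}
so that $\tilde A^{(0)}=A$ and $\tilde A^{(d-1)}=\tilde A$ by~\eqref{eq:ii}. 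Substituting~\eqref{eq:ihat} at level $k+1$ into the rightmost factor $A(\I^{\leq k}_{s_k},i_{k+1},\ldots,i_d)$ and subtracting produces
\begin{equation*}
 \tilde A^{(k)}(\i) - \tilde A^{(k+1)}(\i) = \sum_{s_k} L^{(k)}(i_{\leq k}, s_k)\, \hat E_{k+1}(\I^{\leq k}_{s_k}, i_{k+1},\ldots,i_d),
\end{equation*}
where $L^{(k)}$ is the ``left coefficient'' obtained from the first $k$ factors after summing out the ``vertical'' index $t_k$.

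The payoff of the right--nested hypothesis enters at this point: Lemma~\ref{lemi} lets me rewrite $L^{(k)}(i_{\leq k},s_k) = \sum_{t_k} \tilde A(i_{\leq k},\I^{>k}_{t_k})\, B^{[k]}_{t_k,s_k}$, and I then split $\tilde A = A - (A - \tilde A)$ inside this expression. For the $A$ part, row dominance~\eqref{eq:dom} of the maximum--volume submatrix $A_k$ in the matrix $[A(i_{\leq k},i_{k+1}\I^{>k+1})]$ gives $\bigl|\sum_{t_k}A(i_{\leq k},\I^{>k}_{t_k})B^{[k]}_{t_k,s_k}\bigr|\leq 1$ for arbitrary $i_{\leq k}$, so the $r_k$--fold summation over $s_k$ against $|\hat E_{k+1}|\leq\hat\eps|A|$ contributes at most $r\hat\eps|A|$. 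For the $(A-\tilde A)$ part, the only available bound is $|B^{[k]}_{t_k,s_k}|\leq|A_k^{-1}|=\kappa_k/(r_k|A|)$, which together with $|(A-\tilde A)(i_{\leq k},\I^{>k}_{t_k})|\leq|A-\tilde A|$ and the double sum over $s_k,t_k$ contributes at most $r\kappa\hat\eps\,|A-\tilde A|$. Each piece therefore satisfies
\begin{equation*}
 |\tilde A^{(k)} - \tilde A^{(k+1)}| \leq r\hat\eps|A| + r\kappa\hat\eps\,|A-\tilde A|.
\end{equation*}

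Summing the $d-1$ telescoping differences yields $|A-\tilde A|\leq dr\hat\eps|A| + dr\kappa\hat\eps|A-\tilde A|$, and under the smallness hypothesis $dr\kappa\hat\eps<1$ this rearranges to~\eqref{eq3}. The main subtlety I expect is the bookkeeping around Lemma~\ref{lemi} and the asymmetry of the right--nested setting: there is no ``left'' analogue that would identify $\tilde A(\I^{\leq k},\I^{>k})$ with $A_k$, which would have let the $(A-\tilde A)$ contribution cancel outright. This asymmetry is precisely what forces the bootstrap term $r\kappa\hat\eps\,|A-\tilde A|$ into the estimate for each piece, and consequently the $\kappa$--dependent denominator in~\eqref{eq3} rather than a cleaner estimate linear in $\hat\eps$.
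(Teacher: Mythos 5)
Your proof is correct and follows essentially the same route as the paper's: peel the interpolation~\eqref{eq:ihat} left-to-right one index at a time, invoke Lemma~\ref{lemi} to identify the left coefficient with $\sum_{t_k}\tilde A(i_{\leq k},\I^{>k}_{t_k})B^{[k]}_{t_k,s_k}$, split $\tilde A = A - (A-\tilde A)$ so that row dominance~\eqref{eq:dom} handles the $A$ part and the entrywise bound $|A_k^{-1}|=\kappa_k/(r_k|A|)$ yields the $\kappa$-weighted contribution from $A-\tilde A$, and close the resulting fixed-point inequality. The only cosmetic difference is that you phrase the accumulation as an explicit telescoping sum over $\tilde A^{(k)}-\tilde A^{(k+1)}$ rather than the paper's running recursion for $\E_k$.
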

\begin{proof}
The interpolation sets~\eqref{eq:emb} have been constructed from right to left according to the dimension tree on Fig.~\ref{fig:t2}.
In order to estimate the accuracy we separate indices one-by-one with the interpolation~\eqref{eq:ihat} proceeding from left to right.
We begin with
\begin{equation}\nonumber
  A(\i) = \sum_{s_1,t_1} 
         A(i_1,\I^{>1}_{t_1}) 
         B^{[1]}_{t_1,s_1}
         A(\I^{\leq 1}_{s_1},i_{>1}) + \hat E_1(\i),
\end{equation}
and $|\E_1|=|\hat E_1|\leq\hat\eps |A|.$ 
On the second step we write
\begin{equation}\nonumber
  A(\i) = \sum_{s_2,t_2} 
    A(i_1,i_2,\I^{>2}_{t_2}) 
    B^{[2]}_{t_2,s_2}
    A(\I^{\leq 2}_{s_2},i_{>2}) + \hat E_2(\i).
\end{equation}
We restrict $i_1$ to $\I^{\leq 1}$ and substitute the result into the previous equation. 
\begin{equation}\nonumber
 \begin{split}
  A(\i) & = \sum_{\substack{s_1,s_2\\t_1,t_2}}
      A(i_1,\I^{>1}_{t_1}) 
      B^{[1]}_{t_1,s_1}
      A(\I^{\leq1}_{s_1},i_2,\I^{>2}_{t_2}) 
      B^{[2]}_{t_2,s_2}
      A(\I^{\leq 2}_{s_2},i_{>2})  
  \\ & +  \underbrace{\sum_{s_1,t_1} 
           A(i_1,\I^{>1}_{t_1}) 
           B^{[1]}_{t_1,s_1}
          \hat E_2(\I^{\leq1}_{s_1},i_{>1}) + \E_1(\i)}_{\E_2(\i)}.
 \end{split}
\end{equation}
Since $\left[\I^{\leq1},\I^{>1}\right]=\maxvol\left[A(i_1,i_2\I^{>2})\right],$ submatrix $A_1$ dominates in the corresponding rows 
$
\left|\sum_{t_1} A(i_1,\I^{>1}_{t_1}) B^{[1]}_{t_1,s_1}\right|\leq1,
$
and therefore $|\E_2|\leq (r+1)\hat\eps |A|.$

The third interpolation step writes as follows
\begin{equation}\nonumber
  A(\i) = \sum_{s_3,t_3} 
         A(i_{\leq2},i_3,\I^{>3}_{t_3}) 
         B^{[3]}_{t_3,s_3}
         A(\I^{\leq 3}_{s_3},i_{>3}) + \hat E_3(\i).
\end{equation}
Again, 
we restrict $i_{\leq 2}$ to $\I^{\leq 2}$ and substitute the result into the previous equation. 
\begin{equation}\nonumber
 \begin{split}
  A(\i) & = \sum_{\substack{s_1,s_2,s_3\\t_1,t_2,t_3}}
            A(i_1,\I^{>1}_{t_1}) 
            B^{[1]}_{t_1,s_1}
            \ldots  
            B^{[3]}_{t_3,s_3}
            A(\I^{\leq 3}_{s_3},i_{>3})  
             + \E_3(\i),
  \\
 \E_3(\i) & = \sum_{\substack{s_1,s_2\\t_1,t_2}}
 \underbrace{A(i_1,\I^{>1}_{t_1}) 
            B^{[1]}_{t_1,s_1}
            A(\I^{\leq1}_{s_1},i_2,\I^{>2}_{t_2})}_{\tilde A(i_1,i_2,\I^{>2}_{t_2})} 
      B^{[2]}_{t_2,s_2}
     \hat E_3(\I^{\leq2}_{s_2},i_{>2}) + \E_2(\i).
 \end{split}
\end{equation}

We need to estimate the norm of the matrix in front of $\hat E_3,$ avoiding the exponential amplification of the coefficient. 
To do this, we replace the `piece' of the interpolation train with the subtensor of $A.$ 
Since $A=\tilde A+\E,$ the same holds for the subtensors $A(i_1,i_2,\I^{>2})=\tilde A(i_1,i_2,\I^{>2})+\E(i_1,i_2,\I^{>2}),$
and using Lemma~\ref{lemi} we write
$$
 \sum_{s_1,t_1} A(i_1,\I^{>1}_{t_1}) 
 B^{[1]}_{t_1,s_1}
 A(\I^{\leq1}_{s_1},i_2,\I^{>2}) 
 = A(i_1,i_2,\I^{>2}) - \E(i_1,i_2,\I^{>2}).
$$
Substituting this into the previous equation, we use the domination of the maximum--volume submatrix $A_2$ to write
$
\left| \sum_{t_2} A(i_{\leq 2},\I^{>2}_{t_2}) B^{[2]}_{t_2,s_2}\right| \leq 1 
$
and obtain
$$
|\E_3|\leq (2r+1) \hat\eps |A| + \kappa r \hat\eps |\E|.
$$
In further interpolation steps the error accumulates similarly.
Finally,
$$
|\E| = |\E_{d-1}| 
 \leq dr \hat\eps |A| + d \kappa r \hat\eps |\E|,
$$
which completes the proof.
\end{proof}

Theorems~\ref{thm2} and~\ref{thm3} estimate the accuracy of the interpolation formula~\eqref{eq:ii} with the same interpolation sets.
In Thm.~\ref{thm2} the quasioptimality result is proven with the coefficient $\O(r^d),$ which is much larger than the one in~\eqref{eq1}, cf. the coefficient~$\O(r^{2d})$ in~\cite{lars-htcross-2013}.
Since the coefficient in~\eqref{eq2} grows exponentially with the dimension, it can be hardly used in the real estimates. 
The result of Thm.~\ref{thm3} improves the estimate of Thm.~\ref{thm2} provided the errors $|\hat E_k|$ in~\eqref{eq:ihat} do not grow exponentially with $d.$
In general we cannot provide such upper bound for $|\hat E_k|.$ 
The estimate~\eqref{eq3} is useful in special cases when the theoretical or numerical estimates available for the errors $|\hat E_k|$ are bounded or grow moderately with $d.$

Note that the nestedness of the interpolation sets is essential in the proof of Thm.~\ref{thm3}. 
The result of Thm.~\ref{thm3} cannot be generalized to the `fully' maximum--volume case described in Thm.~\ref{thm1}.

\section{Two--side nestedness and the interpolation property}  \label{sec:emb2}
In this section we consider the interpolation~\eqref{eq:ii} where both left and right interpolation sets are nested, i.e. for all valid $k$ it holds
\begin{equation}\label{eq:emb2}
  i_{>k} \in \I^{>k} \:\Rightarrow\:  i_{>k+1} \in \I^{>k+1}, \qquad
  i_{\leq k} \in \I^{\leq k} \:\Rightarrow\: i_{\leq k-1} \in \I^{\leq k-1}.
\end{equation}

The naive way to construct such sets is to run the right--to--left interpolation pass explained in Sec.~\ref{sec:emb} and keep the right sets $\I^{>k}$ only. 
The left sets $\I^{\leq k}$ are computed by the left--to--right interpolation pass which separates the index $i_1,$ then $i_2,$ etc.
We obtain
\begin{equation}\nonumber
   \left[\J^{\leq k},\I^{>k}\right]  = \maxvol\left[A(i_{\leq k},i_{k+1}\I^{>k+1})\right], \quad  
   \left[\I^{\leq k},\J^{>k}\right]  = \maxvol\left[A(\I^{\leq k-1}i_k,i_{>k})\right].
\end{equation}
Note that $\left[A(\I^{\leq k},\I^{>k})\right]$ is not necessarily the maximum--volume submatrix neither in the subtensor $\left[A(i_{\leq k},i_{k+1}\I^{>k+1})\right],$  nor in $\left[A(\I^{\leq k-1}i_k,i_{>k})\right],$ nor even in their intersection $\left[A(\I^{\leq k-1}i_k,i_{k+1}\I^{>k+1})\right].$
Therefore, we cannot use~\eqref{eq:qm} to estimate the accuracy of~\eqref{eq:ii} with these interpolation sets.
Due to the restrictive sizes, the computation of the maximum volume submatrix is impossible even with implied nestedness.
To make the problem tractable, we should further reduce the search space --- the practical recipes will be discussed in the next section.

If both left and right interpolation sets are nested, Eq.~\eqref{eq:ii} is indeed the \emph{cross interpolation formula}, as shown by the following theorem.
\begin{theorem}\label{thmi}
 For a tensor $A,$ the approximation $\tilde A$ given by~\eqref{eq:ii} with indices $\I^{\leq k},\I^{>k}$ satisfying~\eqref{eq:emb2}, is exact on the positions of all entries evaluated in a tensor
 \begin{equation}\label{eqi}
  A(\I^{\leq k-1},i_k,\I^{>k}) = \tilde A(\I^{\leq k-1},i_k,\I^{>k}), \qquad k=1,\ldots,d.
 \end{equation}
\end{theorem}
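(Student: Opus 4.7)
The plan is to combine Lemma~\ref{lemi}, which uses right--nestedness to contract the right tail of the interpolation train, with a symmetric contraction on the left that uses the newly assumed left--nestedness. In other words, right--nestedness lets us \emph{re-write} $\tilde A(i_{\leq k},\I^{>k})$ in the reduced form of Lemma~\ref{lemi}, and then left--nestedness lets us \emph{collapse} it further when we additionally restrict $i_{\leq k-1}$ to $\I^{\leq k-1}$.

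First I would apply Lemma~\ref{lemi} to $\tilde A(i_{\leq k},\I^{>k})$ to obtain
$$
 \tilde A(i_{\leq k},\I^{>k}) = \sum_{\substack{s_1\ldots s_{k-1}\\t_1\ldots t_{k-1}}} A(i_1,\I^{>1}_{t_1}) B^{[1]}_{t_1,s_1}\cdots B^{[k-1]}_{t_{k-1},s_{k-1}} A(\I^{\leq k-1}_{s_{k-1}},i_k,\I^{>k}).
$$
Fix $s^*\in\{1,\ldots,r_{k-1}\}$ and substitute $i_{\leq k-1}=\I^{\leq k-1}_{s^*}$. By left--nestedness this multi--index decomposes as $\overline{j_1 j_2\ldots j_{k-1}}$ with every prefix lying in the appropriate set, $\overline{j_1\ldots j_m}=\I^{\leq m}_{s^*_m}$ for $m=1,\ldots,k-1$, and in particular $s^*_{k-1}=s^*$. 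Now I would telescope the sum from left to right: the first factor becomes $A(\I^{\leq 1}_{s^*_1},\I^{>1}_{t_1})=(A_1)_{s^*_1,t_1}$, which contracted with $B^{[1]}_{t_1,s_1}=(A_1^{-1})_{t_1,s_1}$ over $t_1$ produces $\delta_{s^*_1,s_1}$. This fixes $s_1=s^*_1$ and turns the next factor $A(\I^{\leq 1}_{s_1},i_2,\I^{>2}_{t_2})$ into $A(\I^{\leq 2}_{s^*_2},\I^{>2}_{t_2})=(A_2)_{s^*_2,t_2}$, which cancels with $B^{[2]}$ to give $\delta_{s^*_2,s_2}$, and so on. After $k-1$ such steps every inner summation has collapsed and $s_{k-1}=s^*$, leaving only $A(\I^{\leq k-1}_{s^*},i_k,\I^{>k})$, which is the desired right--hand side of~\eqref{eqi}.

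The boundary cases are handled uniformly: for $k=1$ the left restriction is vacuous (since $\I^{\leq 0}=\emptyset$) and Lemma~\ref{lemi} itself already gives $\tilde A(i_1,\I^{>1})=A(i_1,\I^{>1})$; for $k=d$ the right restriction is vacuous but the same left--to--right collapse applied directly to formula~\eqref{eq:ii} fixes $s_{d-1}=s^*$ and leaves $A(\I^{\leq d-1}_{s^*},i_d)$.

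The main obstacle I anticipate is the index bookkeeping rather than any real analytic difficulty: one must be careful that the nested decomposition $\I^{\leq k-1}_{s^*}=\overline{\I^{\leq k-2}_{s^*_{k-2}} j_{k-1}}$ is well--defined and unique, so that each Kronecker delta consistently selects a single $s_m=s^*_m$ and no branching occurs in the telescoping. This uniqueness is precisely what the left--nestedness hypothesis~\eqref{eq:emb2} provides (the set $\I^{\leq m-1}$ is embedded into $\I^{\leq m}$ by prepending the shared prefixes), so the argument closes cleanly.
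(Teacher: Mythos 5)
Your proposal is correct and follows essentially the same approach as the paper: the paper's proof is the one--liner ``repeat the arguments from the proof of Lemma~\ref{lemi} for the left and right interpolation sets,'' and you have simply made it explicit by first invoking Lemma~\ref{lemi} (right--nestedness contraction) and then carrying out the symmetric left--to--right telescoping using the left--nestedness in~\eqref{eq:emb2}.
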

\begin{proof} 
 It is sufficient to repeat the arguments from the proof of Lemma~\ref{lemi} for the left and right interpolation sets.
\end{proof}

A $m\times n$ matrix $A$ of rank $r$ is defined by $(m+n)r-r^2$ parameters, e.g. by $mr+nr+r$ elements of the SVD decomposition $A=USV^\trans$ minus $r(r+1)$ normalization constraints $U^\trans U=I,$ $V^\trans V=I.$
The cross interpolation formula~\eqref{eq:im} recovers a rank--$r$ matrix from $(m+n)r-r^2$ entries, if a submatrix $\left[A(\I,\J)\right]$ is nonsingular.
If $\rank A>r,$ formula~\eqref{eq:im} provides the approximation $\tilde A,$ which is exact on $(m+n)r-r^2$ positions of a matrix.
This fact is generalized to the tensor case by the following theorem.
\begin{theorem}
 A tensor $A$ with mode sizes $n_1,\ldots,n_d$ and TT--ranks $r_1,\ldots,r_{d-1}$ is defined by 
 $$
 s=\sum_{k=1}^d r_{k-1} n_k r_k - \sum_{k=1}^{d-1} r_k^2
 $$
 parameters.
 If the left and right interpolation sets satisfy~\eqref{eq:emb2}, and the matrices $A_k,$ $k=1,\ldots,d-1,$ are nonsingular, formula~\eqref{eq:ii} recovers $A$ from exactly $s$ entries.
 If a tensor $A$ is not given by~\eqref{eq:tt} exactly, formula~\eqref{eq:ii} interpolates it on at least $s$ positions.
\end{theorem}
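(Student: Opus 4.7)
Plan: I would decompose the proof into three pieces: (a) establish that $s$ is the dimension of the manifold of TT-tensors with the given ranks, (b) count the distinct tensor entries actually accessed by~\eqref{eq:ii} under two-sided nestedness~\eqref{eq:emb2}, and (c) combine (b) with the known exact recovery property of~\eqref{eq:ii} and with Theorem~\ref{thmi}.

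For (a) I would start from the raw storage $\sum_{k=1}^d r_{k-1} n_k r_k$ in the $d$ cores of~\eqref{eq:tt} and quotient by the gauge freedom of the representation: for any invertible $r_k \times r_k$ matrix $G_k$ with $k=1,\ldots,d-1$, the substitution $X^{(k)} \leftarrow X^{(k)} G_k$, $X^{(k+1)} \leftarrow G_k^{-1} X^{(k+1)}$ leaves the tensor invariant. Since these $d-1$ actions are faithful on generic cores and account for $\sum_{k=1}^{d-1} r_k^2$ parameters of redundancy, the rank-$(r_1,\ldots,r_{d-1})$ TT manifold has essential dimension $s$.

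For (b) I would introduce
\[
P_k = \bigl\{ (i_1,\ldots,i_d) : i_{\leq k-1} \in \I^{\leq k-1},\ i_{>k} \in \I^{>k} \bigr\}, \qquad k = 1,\ldots,d,
\]
as the positions queried by the $k$-th factor of~\eqref{eq:ii}; with the boundary conventions $r_0 = r_d = 1$ we have $|P_k| = r_{k-1} n_k r_k$. The crux will be an \emph{interval property}: two-sided nestedness~\eqref{eq:emb2} forces $P_j \cap P_k \subseteq P_m$ for all $j \leq m \leq k$, because left-nestedness propagates $i_{\leq k-1} \in \I^{\leq k-1}$ downward through every intermediate $\I^{\leq m-1}$ and right-nestedness propagates $i_{>j} \in \I^{>j}$ upward through every $\I^{>m}$. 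A direct check will then show that $P_k \cap P_{k+1}$ consists of exactly the $r_k^2$ positions with $i_{\leq k} \in \I^{\leq k}$ and $i_{>k} \in \I^{>k}$, i.e.\ the entries of $A_k$. The interval property makes $\{k : x \in P_k\}$ a contiguous block $\{a_x,\ldots,b_x\}$ for every $x \in \bigcup_k P_k$, so each such $x$ contributes $b_x - a_x + 1$ to $\sum_k |P_k|$ and $b_x - a_x$ to $\sum_{k<d} |P_k \cap P_{k+1}|$, yielding
\[
\Bigl|\bigcup_{k=1}^d P_k \Bigr| = \sum_{k=1}^d |P_k| - \sum_{k=1}^{d-1} |P_k \cap P_{k+1}| = \sum_{k=1}^d r_{k-1} n_k r_k - \sum_{k=1}^{d-1} r_k^2 = s.
\]

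For (c), if $A$ is itself of the form~\eqref{eq:tt} with the given ranks and all $A_k$ are nonsingular, I would invoke the cross identity~\eqref{eq:ii} from~\cite{ot-ttcross-2010}, which gives $\tilde A \equiv A$, so the $s$ entries in $\bigcup_k P_k$ suffice for exact reconstruction. In the general case I would apply Theorem~\ref{thmi}, which already guarantees $\tilde A = A$ on each $P_k$ and hence on the union of size $s$. The main obstacle I foresee is cleanly establishing the interval property so that the inclusion-exclusion collapses after the pairwise terms; once this is in place, the parameter count and the appeal to previous results are routine.
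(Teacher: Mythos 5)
Your proposal is correct and follows essentially the same path as the paper: count the entries queried by formula~\eqref{eq:ii}, show that the only overlaps that matter are the $r_k^2$ adjacent ones contained in $A_k$, and invoke the exact recovery result of~\cite{ot-ttcross-2010} together with Theorem~\ref{thmi}. The only notable differences are cosmetic: for the dimension count the paper simply cites~\cite[Prop.~A.3]{ushmaev-tt-2013}, whereas you sketch the gauge-freedom heuristic (fine, though one would still need to argue the action is free on generic cores to make it airtight); and you phrase the overlap count via an explicit interval property and telescoping inclusion--exclusion, while the paper argues more tersely that any intersection of blocks $p$ and $k{+}1$ with $p\le k$ is contained in $A_k$ --- these are the same observation, and your formulation is arguably the cleaner way to justify that the pairwise-adjacent corrections suffice.
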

\begin{proof}
 The first statement is proven in~\cite[Prop. A.3]{ushmaev-tt-2013}. 
 Taking into account the result of Thm.~\ref{thmi}, the second and the third statements require to calculate the total number of tensor entries in all subtensors in~\eqref{eqi}.
 Each block $\left[A(\I^{\leq k-1},i_k,\I^{>k})\right]$ consists of $r_{k-1}n_kr_k$ elements of a tensor, but some entries contribute to more than one block.
 For example, if~\eqref{eq:emb2} holds, subtensors $\left[A(i_1,\I^{>1})\right]$ and $\left[A(\I^{\leq 1},i_2,\I^{>2})\right]$ intersect by the submatrix $A_1=\left[A(\I^{\leq 1},\I^{>1})\right],$ which has $r_1^2$ elements.
 Similarly, $\left[A(\I^{\leq k-1},i_k,\I^{>k})\right]$ and $\left[A(\I^{\leq k},i_{k+1},\I^{>k+1})\right]$ have $r_k^2$ common elements in the submatrix $A_k.$
 
 The common elements of  $\left[A(\I^{\leq k},i_{k+1},\I^{>k+1})\right]$ and $\left[A(\I^{\leq p-1},i_p,\I^{>p})\right]$ are described by the following conditions 
 $$
 i_{\leq p-1}\in\I^{\leq p-1}, \quad i_{\leq k} \in \I^{\leq k}, \qquad i_{>p}\in\I^{>p}, \quad i_{>k+1}\in\I^{>k+1}.
 $$
 If $p<k,$ they are reduced by~\eqref{eq:emb2} to $i_{\leq k} \in \I^{\leq k}$ and  $i_{>p}\in\I^{>p},$ and it holds
 $$
 \{i_{\leq k} \in \I^{\leq k}, i_{>p}\in\I^{>p}\} \subset 
 \{i_{\leq k} \in \I^{\leq k}, i_{>k}\in\I^{>k}\}.
 $$
 Therefore, all common entries of $\left[A(\I^{\leq k},i_{k+1},\I^{>k+1})\right]$ and $\left[A(\I^{\leq p-1},i_p,\I^{>p})\right]$ belong to $A_k$ for $p=1,\ldots,k-1.$
 The total number of entries which belong to more than one block equals $\sum_{k=1}^{d-1} r_k^2,$ which completes the proof.
\end{proof}

\section{Interpolation algorithms for matrices and tensors} \label{sec:alg}
We start this section with a short overview of the cross interpolation algorithms for matrices.
The idea of reconstruction and approximation of a matrix from several columns and rows by the~\emph{skeleton decomposition}~\eqref{eq:im} or the \emph{pseudoskeleton decomposition} $\tilde A=CGR,$ $C=\left[A(i,\J)\right],$ $R=\left[A(\I,j)\right],$  has been suggested by Goreinov and Tyrtyshnikov~\cite{gt-psa-1995}.
In~\cite{gtz-psa-1997} the accuracy of the pseudoskeleton approximation has been studied and it has been pointed out that a good cross should intersect by a well bounded submatrix.
The connection with the maximum--volume submatrix has been mentioned in~\cite{gtz-psa-1997}, and the maximum--volume principle has been presented in more detail in~\cite{gt-maxvol-2001}.

\begin{algorithm}[t] 
 \caption{Greedy cross interpolation algorithm for tensor trains} \label{algg}
 \begin{algorithmic}[1]
  \REQUIRE Function to compute entries of a tensor $A=\left[A(i_1,\ldots,i_d)\right]$
  \ENSURE Cross interpolation~\eqref{eq:ii} with the nested interpolation sets~\eqref{eq:emb2} 
  \STATE $\I^{\leq k}=\emptyset,$ $\I^{>k}=\emptyset,$ $k=1,\ldots,d,$ $\tilde A=0,$ $E=A$
  \WHILE{$|A-\tilde A|$ is not sufficiently small}
    \STATE Find a pivot $i^\new=(i^{\new}_1,\ldots,i^{\new}_d)$ s.t. $|E(i^\new_1,\ldots,i^\new_d)| \simeq |A-\tilde A|$
    \STATE Add $i^\new_{\leq k}$ to $\I^{\leq k},$ and $i^\new_{>k}$ to $\I^{>k},$ $k=1,\ldots,d-1$
    \STATE Update the interpolation $\tilde A$ by~\eqref{eq:ii}
  \ENDWHILE
 \end{algorithmic}
\end{algorithm}

The search for the maximum--volume submatrix \emph{per se} is an NP--hard problem~\cite{bartholdi-1982}. 
For practical computations, it is necessary to find a \emph{sufficiently good} submatrix reasonably fast.
The alternating direction algorithm has been proposed in~\cite{tee-cross-2000}, which adaptively increases the size of the interpolation cross following the maximum--volume principle at each step, and computes the approximation of a matrix in linear time w.r.t. the size.
The greedy algorithm of such kind, equivalent to the Gaussian elimination with partial pivoting, was then suggested by Bebendorf~\cite{bebe-2000}.
Due to its particular simplicity, it has become widely known as the \emph{adaptive cross approximation} (ACA).
In practical computations, ACA and similar methods with minimal information are liable to breakdowns, i.e. they may quit when a good approximation is not yet obtained.
A cheap remedy proposed in~\cite{sav-2006} is to check the accuracy on the random set of entries and restart the algorithm if necessary.\footnote{Published in English later as~\cite[Alg. 3]{ost-chem-2010}}
Another well--known sampling method is the $CUR$ algorithm of Mahoney et al~\cite{mahoney-cur-2006}, which is the pseudoskeleton $CGR$ decomposition where positions of the rows and columns are chosen randomly.

The accuracy of the maximum--volume cross approximation is estimated for any matrix~\cite{gt-maxvol-2001,schneider-cross2d-2010,gt-skel-2011}.
Algorithms which use a few elements (e.g. ACA) are \emph{heuristic} and construct the approximation which can be arbitrarily bad for other matrix elements.
The accuracy of such algorithms can be estimated in special cases, e.g. for matrices generated by asymptotically smooth functions on quasi--uniform grids, see~\cite{bebe-2000,tee-kron-2004}, cf.~\cite{tee-tensor-2003} in many dimensions.

The existing cross interpolation algorithms for tensors can be classified similarly.
The skeleton decomposition is generalized to the tensor case in~\cite{ot-ttcross-2010} by formula~\eqref{eq:ii}, where the submatrices $A_k=\left[A(\I^{\leq k},\I^{>k})\right]$ play the same role as $\left[A(\I,\J)\right]$ in~\eqref{eq:im}.
The `existence result' is generalized from the matrix case~\cite{gt-maxvol-2001,schneider-cross2d-2010,gt-skel-2011} to the TT case by Thm.~\ref{thm1}.
Algorithm proposed in~\cite{ot-ttcross-2010} approximates the maximum--volume positions in the ALS way, similarly to the one from~\cite{tee-cross-2000}.

A greedy cross interpolation algorithm for the TT format can be suggested similarly to the matrix case, see e.g. Alg.~\ref{algg}. 
Similarly to the ACA, Alg.~\ref{algg} relies on the interpolation property for the tensor trains, established by Thm.~\ref{thmi}.
On each step Alg.~\ref{algg} searches for a pivot $i^\new$  where the error of the current approximation is (quasi)maximum in modulus.
Then it adds the indices of $i^\new=\overline{i_{\leq k}^\new i_{>k}^\new}$ to all subsets $\I^{\leq k}$ and $\I^{>k},$ $k=1,\ldots,d-1,$ to maintain the two--side nestedness~\eqref{eq:emb2}.
The updated interpolation is exact on all \emph{lines} $(i^{\new}_1,\ldots,i^\new_{k-1},i_k,i^\new_{k+1},\ldots,i_d^\new),$ $i_k=1,\ldots,n_k$ $k=1,\ldots,d.$

The full pivoting in higher dimensions is impossible due to the curse of dimensionality, and we need cheaper alternatives  to find a new pivot and estimate the accuracy for the stopping criterion.
Following the tensor--CUR algorithm of Mahoney et al~\cite{mahoney-simax-2008} we can choose indices randomly.
Another approach is to choose the maximum in modulus element of the current residual among a randomly sampled set.
The third option is to choose the pivot from a \emph{restricted set} similarly to the ACA approach, check the accuracy of the approximation over a random set of entries, and restart if necessary, see~\cite[Alg. 3]{ost-chem-2010}.

The restricted pivoting set can naturally arise from the \emph{locality} requirement.
By this we mean that with a new pivot we should modify only a few interpolation sets $\I^{\leq k}$ and $\I^{>k}$ and increase only a few TT--ranks of the approximation, not all of them.
To put it differently, a pivoting algorithm should update only a few TT--cores of~\eqref{eq:ii} at each step, similarly to the ALS and DMRG algorithms introduced in quantum physics.

Following the DMRG algorithm, we choose a new pivot $i^\new$ in the DMRG \emph{supercore}  $A'=\left[A(\I^{\leq k-1},i_k, i_{k+1},\I^{>k+1})\right].$ 
This choice provides $i^\new_{\leq k-1}\in\I^{\leq k-1}$ and by~\eqref{eq:emb2} $i^\new_{\leq p}\in\I^{\leq p}$ for $p\leq k-1.$
Similarly, $i^\new_{>k+1}\in\I^{>k+1}$ and by nestedness $i^\new_{>p}\in\I^{>p}$ for $p\geq k+1.$
When we add $i^\new_{\leq k}$ to $\I^{\leq k}$ and $i^\new_{>k}$ to $\I^{>k},$ the two--side nestedness~\eqref{eq:emb2} is preserved \emph{ipso facto.} 

\begin{algorithm}[t] 
 \caption{Greedy restricted cross interpolation algorithm for tensor trains} \label{algi}
 \begin{algorithmic}[1]
  \REQUIRE Function to compute entries of a tensor $A=\left[A(i_1,\ldots,i_d)\right]$
  \ENSURE Cross interpolation~\eqref{eq:ii} with the nested interpolation sets~\eqref{eq:emb2} 
  \STATE Choose $\I^{\leq k},$ $\I^{>k},$ $k=1,\ldots,d,$ which satisfy~\eqref{eq:emb2}, and compute $\tilde A$ by~\eqref{eq:ii}
  \WHILE{stopping criterion is not satisfied}
  \FOR[Left--to--right half--sweep]{$k=1,\ldots,d-1$}
    \STATE Apply the cross interpolation (e.g.~\cite[Alg. 3]{ost-chem-2010}) to the DMRG supercore matrix $\left[A(\I^{\leq k-1}i_k, i_{k+1}\I^{>k+1})\right],$ 
    using sets $\I^{\leq k},\I^{>k}$ as the initial guess, and compute~\eqref{eq:dmrgs}
    with  $\I^{\leq k} \subset \J^{\leq k}$ and  $\I^{> k} \subset \J^{> k}$ 
    \STATE Substitute $\I^{\leq k}$ and $\I^{>k}$ by the expanded sets $\J^{\leq k}$ and $\J^{>k}$  
  \ENDFOR
  \STATE Perform right--to-left half--sweep in the same way
  \ENDWHILE
 \end{algorithmic}
\end{algorithm}

The greedy algorithm with pivoting in $A'$ can be implemented as a simple modification of the cross interpolation algorithm TT--RC from~\cite{so-dmrgi-2011proc}.
The TT--RC algorithm is of the DMRG type, which means that it updates two neighboring TT--cores at a step, computing the matrix $A'$ in full.
The proposed Alg.~\ref{algi} substitutes this step with the cross interpolation and approximates
\begin{equation}\label{eq:dmrgs}
    A(\I^{\leq k-1}i_k, i_{k+1}\I^{>k+1}) 
           \approx \sum_{s_k,t_k} A(\I^{\leq k-1}i_k, \J_{t_k}^{>k}) 
           \left[A(\J_{s_k}^{\leq k},\J_{t_k}^{>k})\right]^{-1} 
           A(\J_{s_k}^{\leq k}, i_{k+1}\I^{>k+1}), 
\end{equation}
where $\J^{\leq k}$ and $\J^{>k}$ are computed by the matrix cross interpolation algorithm, s.t. 
\begin{equation}\nonumber 
   \left[\J^{\leq k},\J^{>k}\right]  \simeq \arg\max_{\substack{\I^{\leq k-1} \\ \I^{>k+1}}} \vol\left[A(\I^{\leq k-1}i_k,i_{k+1}\I^{>k+1})\right].
\end{equation}
The resulting algorithm requires $\O(dnr^2)$ evaluation of tensor elements and $\O(dnr^3)$ additional operations, i.e. scales linearly in the mode size and very moderately in the TT--rank.
The algorithm is rank--revealing, i.e. will not increase the TT--ranks of the approximation~\eqref{eq:ii} over the TT--ranks of a given tensor.

The greedy algorithms are not always good in practice, since the positions chosen as the initial guess may approximate the maximum--volume submatrices inaccurately and should be removed when the interpolation sets are sufficiently large.
Only a slight modification of Alg.~\ref{algi} is required to develop a non--greedy version.

\newpage
\section{Numerical experiments} \label{sec:num}
The numerical results have been obtained using the Iridis3 High Performance Computing Facility at the University of Southampton.\footnote{Iridis3 is based on Intel $2.4$ GHz processors, for more specifications see \href{http://cmg.soton.ac.uk/iridis}{cmg.soton.ac.uk/iridis}.}
Cross interpolation and auxiliary tensor train subroutines are written in \textsc{Fortran$90$} by the author.
The code was compiled using the Intel Composer and linked with \textsc{Lapack}/\textsc{Blas} subroutines provided with the {MKL} library.

In the experiments we use a very simple version of Alg.~\ref{algi}.
On each step (Line~4) we improve the current approximation by adding only one cross to $[\I^{\leq k},\I^{>k}]$. 
The position of the new cross is computed as follows.
First, a random sampling is performed on $r_{k-1}n_k+n_{k+1}r_{k+1}$ entries of the matrix $\left[A(\I^{\leq k-1}i_k, i_{k+1}\I^{>k+1})\right],$ and an element is chosen where the error of the current interpolation is maximum in modulus.
Then the residual for the row or column (for left and right half--sweep, resp.) which contains this element is evaluated, and the pivot $i^\new$ is chosen among its entries.
If pivot is not zero up to the machine precision, the obtained cross is added to interpolation sets $\I^{\leq k},\I^{>k}.$
If pivot is machine null, the rank $r_k$ is not increased.

The interpolation sets are always initialized by the index $(1,1,\ldots,1).$

\subsection{The quasioptimality coefficient}
\begin{figure}[t]
\hbox to \textwidth{\hfil
\includegraphics[width=.45\textwidth]{./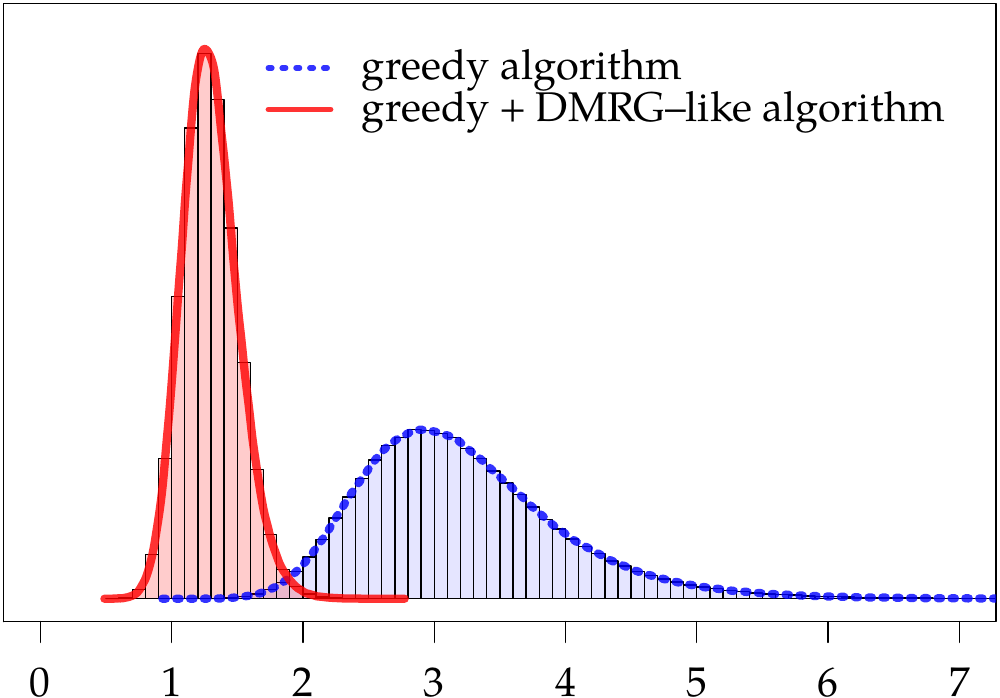} \hfil
\includegraphics[width=.45\textwidth]{./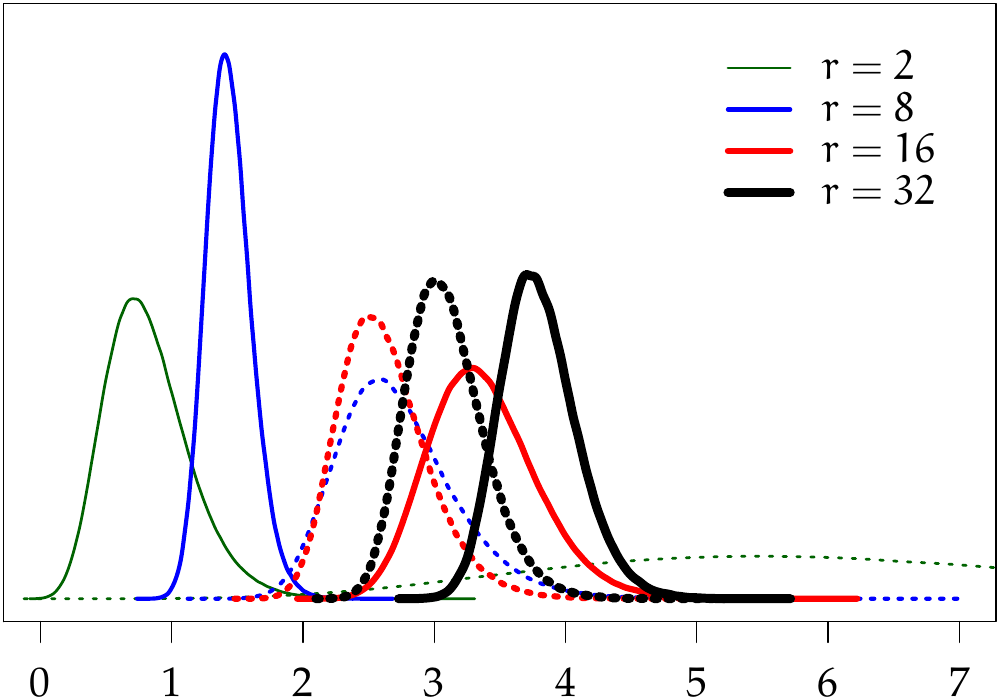} \hfil
}
\vskip 4mm
\hbox to \textwidth{\hfil
\includegraphics[width=.45\textwidth]{./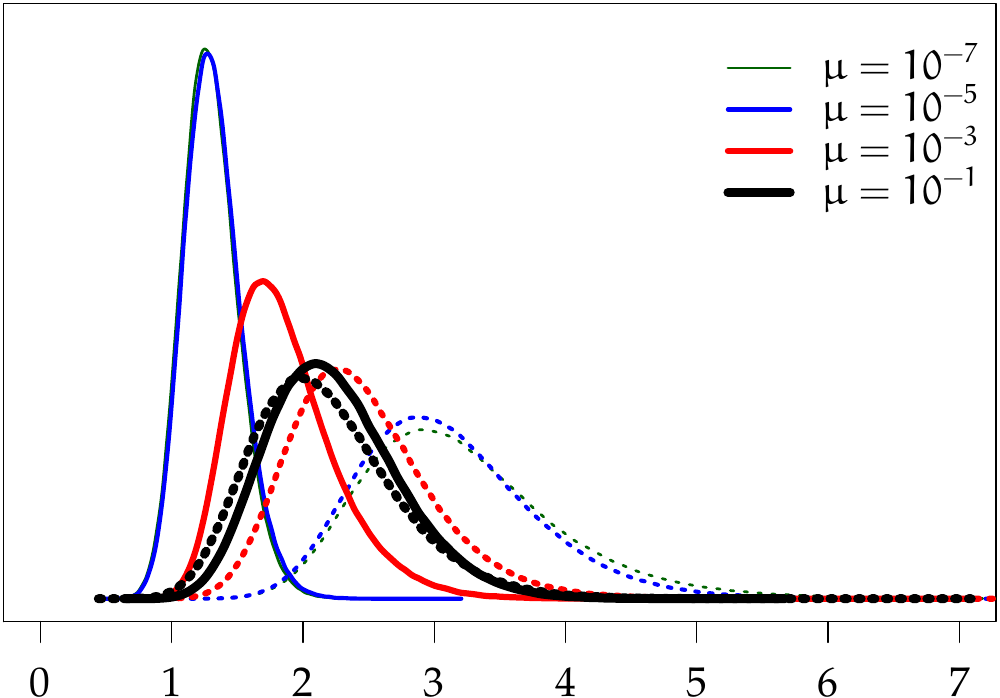} \hfil
\includegraphics[width=.45\textwidth]{./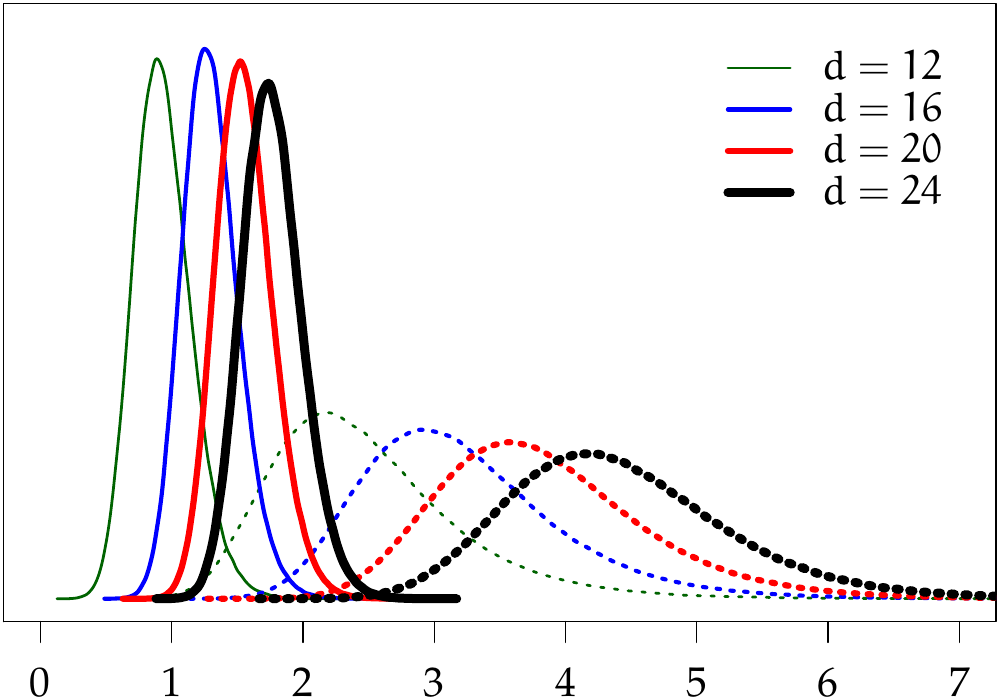} \hfil
}
\caption{Distribution of $\log_2 (|A-\tilde A|/|A-X|)$ for randomly generated tensors $A$ given by~\eqref{eq:rnd} and the cross interpolation $\tilde A,$ computed by Alg.~\ref{algi} (dashed lines), and additionally improved by the TT-RC algorithm from~\cite{so-dmrgi-2011proc} (solid lines). 
Dimension $d=16,$ mode size $n=2,$ noise level $\mu=10^{-7},$ rank $r=5,$ unless other value is shown on the graph.}
\label{fig:q}
\end{figure}

For a number of randomized experiments we measure the ratio between the accuracy of the approximation in the TT format~\eqref{eq:tt} and the cross interpolation~\eqref{eq:ii} with the same TT--ranks.
Given dimension $d,$ mode size $n=2,$ mode ranks $r$ and  \emph{noise level} $\mu,$ we consider the tensor 
\begin{equation}\label{eq:rnd}
 A = X + \mu R, \qquad |X|=1, \quad |R|=1,
\end{equation}
where $R$ is random and $X$ is given by the TT format~\eqref{eq:tt} with TT--ranks $r$ and random TT--cores.
All random elements are independently and uniformly distributed on the unit set and we seed them using the internal pseudorandom generator provided with the compiler.

We apply  Alg.~\ref{algi} to compute the initial cross interpolation $\tilde A_{\mathrm{greedy}}$ with TT--ranks not larger than $r.$ 
Then we run $10$ additional sweeps of the DMRG--like TT--RC algorithm~\cite{so-dmrgi-2011proc} to improve the positions of the interpolation crosses and obtain $\tilde A_{\mathrm{DMRG}}.$
Density distributions of the logarithm of the quasioptimality coefficient for $\tilde A_{\mathrm{greedy}}$ and $\tilde A_{\mathrm{DMRG}}$ are shown on Fig.~\ref{fig:q}.
The number of tests for each density distribution curve is at least $2^{20}.$

We note that for the randomly generated tensors, the quasioptimality coefficient is not very large.
For example, the top left graph on Fig.~\ref{fig:q} corresponds to $d=16$ and $r=5.$ 
The estimate of Thm.~\ref{thm1} provides the upper bound for the quasioptimality coefficient  $(2r+\kappa r+1)^{\lceil \log_2d \rceil} (r+1)^2 \geq 16^4 6^2 \geq 2^{21}.$
The computed value is
$$
\log_2(|A-\tilde A_{\mathrm{greedy}}|/|A-X|)=3.2\pm2.1, \qquad
\log_2(|A-\tilde A_{\mathrm{DMRG}}|/|A-X|)=1.3\pm0.6.
$$
Therefore, for the considered experiment the upper bound $2^{21}$ provided by Thm.~\ref{thm1} overestimates the actual value by a factor $\geq 2^{19.5}.$

It is important how the accuracy of the interpolation depends on the dimension $d$ and the TT--rank $r.$
The result of these experiments are shown in the right column of Fig.~\ref{fig:q}.
We see that the coefficient grows with rank and dimension slower than the upper bound~\eqref{eq1}.
For example, for $r=32$ the upper bound is $\gtrsim 2^{58},$ assuming $\kappa=1.$ The actual coefficient computed in the numerical experiments is of the order $2^3$ for $\tilde A_{\mathrm{greedy}}$ and $2^4$ for $\tilde A_{\mathrm{DMRG}}.$
Note that in this case the interpolation improved by the DMRG--like algorithm has worse accuracy than the interpolation returned by~Alg.~\ref{algi}.
This may be explained by the fact that the TT--RC algorithm has the truncation step which reduces TT--ranks and introduces a perturbation to the tensor.
The double--side nestedness~\eqref{eq:emb2} is not preserved during this step which may result in the loss of the interpolation property and deteriorate the accuracy. 
This emphasizes the importance of the interpolation property given by Thm.~\ref{thmi}. 

Finally, we analyze how the accuracy of the cross interpolation depends on the noise level $\mu.$
On the bottom left graph on Fig.~\ref{fig:q} we see that this parameter does not change the distribution significantly.
When $\mu\leq10^{-5},$ further reduction of the noise level has no effect on the distribution of the quasioptimality coefficient.

We summarize that for random tensors the accuracy of the computed cross interpolation behaves much better than the upper bound in~\eqref{eq1}.

\subsection{Speed and accuracy of the greedy interpolation algorithm}
\begin{table}[p]
 \def\eee{_{10}-}
 \def\p#1#2#3{\begin{tabular}{c} $#1$ \\ $#2$ \\ $#3$ \end{tabular}}
 \begin{center}
   \begin{tabular}{c|ccc|ccc}
         & \multicolumn{3}{c|}{$d=2^4$} & \multicolumn{3}{c}{$d=2^5$}  \\
    $r$  & $n=2^5$ & $n=2^7$ & $n=2^9$                                                                 & $n=2^5$ & $n=2^7$ & $n=2^9$  \\ \hline
    $ 6$ & \p{8\eee3 }{7\eee3 }{0.007} & \p{1\eee1 }{1\eee1 }{0.035}  &                                & \p{1\eee1 }{6\eee2 }{0.021} &                             &                           \\ \hline  
    $12$ & \p{2\eee5 }{3\eee6 }{0.033} & \p{7\eee4 }{2\eee4 }{0.14}  & \p{2\eee2 }{7\eee3 }{0.59}      & \p{6\eee5 }{9\eee6 }{0.097} & \p{3\eee3 }{1\eee3 }{0.37}  & \p{8\eee2 }{4\eee2 }{1.56}  \\ \hline  
    $18$ & \p{8\eee9 }{2\eee9 }{0.081} & \p{2\eee6 }{3\eee6 }{0.34}  & \p{7\eee5 }{1\eee5 }{1.45}      & \p{2\eee8 }{5\eee9 }{0.23}  & \p{2\eee5 }{2\eee6 }{0.90}  & \p{1\eee4 }{5\eee4 }{3.97}  \\ \hline  
    $24$ & \p{2\eee12}{1\eee12}{0.156} & \p{1\eee8 }{5\eee9 }{0.65}  & \p{1\eee6 }{7\eee7 }{2.88}      & \p{5\eee12}{2\eee12}{0.47}  & \p{5\eee8 }{1\eee8 }{1.82}  & \p{5\eee6 }{1\eee6 }{8.06}  \\ \hline  
    $30$ &                             & \p{3\eee11}{2\eee11}{1.05}  & \p{2\eee8 }{7\eee9 }{4.93}      & \p{1\eee12}{2\eee13}{0.60}  & \p{9\eee11}{3\eee11}{2.80}  & \p{3\eee8 }{6\eee9 }{12.6}  \\ \hline  
    $36$ &                             & \p{1\eee12}{8\eee13}{1.53}  & \p{3\eee10}{1\eee10}{8.09}      &                             & \p{3\eee12}{1\eee12}{4.11}  & \p{6\eee10}{1\eee10}{20.8}  \\
\multicolumn{7}{c}{} \\
         & \multicolumn{3}{c|}{$d=2^6$} & \multicolumn{3}{c}{$d=2^7$}  \\
    $r$  & $n=2^5$ & $n=2^7$ & $n=2^9$                                                          & $n=2^5$ & $n=2^7$ & $n=2^9$  \\ \hline
    $ 9$ & \p{6\eee3 }{5\eee4 }{0.15} & \p{2\eee1 }{1\eee1 }{0.70}  &                                 & \p{4\eee3 }{5\eee4 }{0.50} &                             &                           \\ \hline  
    $15$ & \p{2\eee5 }{1\eee6 }{0.45} & \p{3\eee3 }{2\eee4 }{1.85}  & \p{7\eee2 }{3\eee2 }{7.33}      & \p{3\eee6 }{4\eee7 }{1.45} & \p{2\eee3 }{6\eee4 }{5.85}  & \p{1\eee1 }{6\eee2 }{23.6}  \\ \hline  
    $21$ & \p{5\eee10}{1\eee10}{1.01} & \p{3\eee6 }{2\eee7 }{3.70}  & \p{2\eee4 }{4\eee5 }{15.1}      & \p{1\eee9 }{1\eee10}{2.90} & \p{1\eee6 }{2\eee7 }{11.8}  & \p{1\eee3 }{3\eee4 }{48.0}  \\ \hline  
    $27$ & \p{3\eee12}{7\eee13}{1.45} & \p{3\eee9 }{3\eee10}{6.27}  & \p{2\eee6 }{4\eee7 }{26.7}      & \p{1\eee11}{1\eee12}{4.56} & \p{5\eee9 }{6\eee10}{19.9}  & \p{4\eee6 }{5\eee7 }{82.8}  \\ \hline  
    $33$ &                            & \p{1\eee11}{3\eee12}{9.97}  & \p{4\eee9 }{1\eee9 }{42.9}      &                            & \p{3\eee11}{5\eee12}{29.7}  & \p{1\eee8 }{2\eee9 }{128}  \\ \hline  
    $39$ &                            & \p{7\eee12}{2\eee12}{13.1}  & \p{2\eee10}{2\eee11}{64.2}      &                            &                             & \p{3\eee10}{6\eee11}{186}  \\ \hline  
   \end{tabular}
 \end{center}
 \caption{Accuracy and the CPU time for Alg.~\ref{algi} applied for the interpolation of tensor~\eqref{eq:1r} with dimension $d,$  mode size $n$ and TT--ranks $r.$
 Each cell contains the estimates of the relative error in the Chebyshev norm $|A-\tilde A|_{\sim}/|A|_{\sim}$ and  
 in the Frobenius norm $\|A-\tilde A\|_{\sim}/\|A\|_{\sim},$ 
 and the computation time in seconds.
 }
 \label{tab}
\end{table}
We apply Alg.~\ref{algi} to the tensor $A=[A(i_1,\ldots,i_d)]$ with elements
\begin{equation}\label{eq:1r}
 A(i_1,\ldots,i_d) = {1}/{\sqrt{i_1^2 + \ldots + i_d^2}}.
\end{equation}
This example is the standard test considered in e.g.~\cite{ost-tucker-2008,ot-ttcross-2010,lars-htcross-2013}.
We test the algorithm for large mode sizes $n$ and dimensions $d,$ where the evaluation of the accuracy $|A-\tilde A|$ is impossible due to the restrictively large number of entries.
We substitute the exact evaluation by estimates computed on a large number of randomly distributed elements as follows
$$
| A |_{\sim} = \max_{i \in \I} |A(i_1,\ldots,i_d)|, \qquad \|A\|_{\sim}^2 = \frac{n_1\ldots n_d}{\#\I}\sum_{i \in \I} |A(i_1,\ldots,i_d)|^2,
$$
where indices $i=(i_1,\ldots,i_d) \in \I$ are chosen randomly, and  $\#\I$ denotes the number of elements in the random set $\I.$
In our tests $\#\I \geq 2^{30}.$

The results are collected in Tab.~\ref{tab}.
It is not difficult to notice the linear scaling w.r.t. the mode size $n.$
The scaling in dimension is between $\O(d)$ and $\O(d^2),$ since the algorithm requires $\O(d)$ evaluations of tensor elements, and each tensor element depends on $d$ indices.
The scaling in TT--rank is almost quadratic, which shows that the evaluation of tensor elements takes longer than other operations.

For large ranks, the relative accuracy of the interpolation computed by Alg.~\ref{algi} reduces almost to the machine precision threshold and does not stagnate at the level of $10^{-8}$ or $10^{-9},$ cf.~\cite{ot-ttcross-2010,lars-htcross-2013}.
The Alg.~\ref{algi} also appears to be very fast: using one core on the Iridis3 cluster, it is two to three times faster than the HT cross interpolation algorithm~\cite{lars-htcross-2013} applied to the same problem.

\section{Conclusions and future work}
We have generalized two results on the matrix cross interpolation to the tensor case, using the cross interpolation formula~\eqref{eq:ii} proposed by Oseledets and Tyrtyshnikov~\cite{ot-ttcross-2010} for the tensor train format.
First, we have shown that the maximum--volume cross interpolation is quasioptimal, i.e. its accuracy in the Chebyshev norm differs from the best possible accuracy by the factor which does not grow exponentially with dimension.
This generalizes the matrix result of Goreinov and Tyrtyshnikov~\cite{gt-skel-2011}.
Second, we have shown that for the nested interpolation indices formula~\eqref{eq:ii} computes $\sum_{k=1}^d r_{k-1}n_kr_k - \sum_{k=1}^{d-1}r_k^2$ parameters of the TT format inspecting exactly the same number of tensor entries, and on these elements the interpolation is exact.
This generalizes the classical result on the skeleton approximation of matrices to the TT case.

In the tensor case, the maximum--volume interpolation sets in general are not nested, and we cannot have the quasioptimality and the interpolation property simultaneously.
It would be interesting to find the nested interpolation sets which provide a moderate coefficient of the quasioptimality.


Using the interpolation property, we have proposed the fast and simple greedy cross interpolation algorithm, which provides very accurate results for the standard test, and is several times faster than other methods.
Many variants of this algorithm can be developed, taking in account the interpolation property and the available information on the error of the interpolation for different entries of a tensor.
It is easy to overcome the breakdowns, if they occur, simply by taking random pivots in larger subtensors or in the whole tensor, as is suggested in Alg.~\ref{algg}.
In our experiments we have never had a breakdown using the restricted pivoting in Alg.~\ref{algi}.

The theoretical and experimental results of this paper show that the curse of dimensionality cannot stop us from developing fast and reliable cross interpolation methods in higher dimensions.
The cross interpolation allows to convert a given high--dimensional data array into the tensor train format, for which many operations essential for the scientific computing are already possible.
For many high--dimensional problems we can try to substitute the randomized (Monte Carlo) sampling by the cross interpolation in order to benefit from its adaptivity.
This is a subject of further work.

\section*{Acknowledgments}
{\small
The theoretical results of this paper have been obtained when the author was with the Institute of Numerical Mathematics RAS, Moscow.
The author is grateful to Prof.~Eugene Tyrtyshnikov and Dr.~Ivan Oseledets for fruitful discussions.
The author appreciates the use of the Iridis High Performance Computing Facility, and the associated support services at the University of Southampton, that proved essential to carry out the extensive numerical experiments reported in this paper.
The author acknowledges the hospitality of SAM ETH Z\"urich, where the most of the manuscript has been drafted.
}


\begin{thebibliography}{50}

\bibitem{lars-htcross-2013}
\textsc{ J.~Ballani, L.~Grasedyck, and M.~Kluge}, Black box approximation of
  tensors in hierarchical {Tucker} format, {\em Linear Alg. Appl.},
  428:639--657, 2013.
\newblock \doi{10.1016/j.laa.2011.08.010}.

\bibitem{bartholdi-1982}
\textsc{ J.~J. Bartholdi}, A good submatrix is hard to find, School of
  industrial and systems engineering, Georgia Institute of technology, 1982.

\bibitem{bebe-2000}
\textsc{ M.~Bebendorf}, Approximation of boundary element matrices, {\em Numer.
  Mathem.}, 86(4):565--589, 2000.
\newblock \doi{10.1007/pl00005410}.

\bibitem{bebe-maca-2013pre}
\textsc{ M.~Bebendorf and C.~Kuske}, Separation of variables for function
  generated high--order tensors, Preprint 1303, Institut {f\"ur} Numerische
  Simulation, 2013.

\bibitem{griebel-sparsegrids-2004}
\textsc{ H.-J. Bungatrz and M.~Griebel}, Sparse grids, {\em Acta Numerica},
  13(1):147--269, 2004.
\newblock \doi{10.1017/S0962492904000182}.

\bibitem{dks-ttfft-2012}
\textsc{ S.~V. Dolgov, B.~N. Khoromskij, and D.~V. Savostyanov}, Superfast
  {Fourier} transform using {QTT} approximation, {\em J. Fourier Anal. Appl.},
  18(5):915--953, 2012.
\newblock \doi{10.1007/s00041-012-9227-4}.

\bibitem{DoOs-dmrg-solve-2011}
\textsc{ S.~V. Dolgov and I.~V. Oseledets}, Solution of linear systems and
  matrix inversion in the {TT}-format, {\em SIAM J. Sci. Comput.},
  34(5):A2718--A2739, 2012.
\newblock \doi{10.1137/110833142}.

\bibitem{ds-amr1-2013}
\textsc{ S.~V. Dolgov and D.~V. Savostyanov}, Alternating minimal energy
  methods for linear systems in higher dimensions. {Part I}: {SPD} systems,
  {arXiv} preprint 1301.6068, 2013.
\newblock \url{ http://arxiv.org/abs/1301.6068}.

\bibitem{ds-amr2-2013}
\leavevmode\vrule height 2pt depth -1.6pt width 23pt, Alternating minimal
  energy methods for linear systems in higher dimensions. {Part II}: {Faster}
  algorithm and application to nonsymmetric systems, {arXiv} preprint
  1304.1222, 2013.
\newblock \url{ http://arxiv.org/abs/1304.1222}.

\bibitem{mahoney-cur-2006}
\textsc{ P.~Drineas, R.~Kannan, and M.~W. Mahoney}, Fast {Monte Carlo}
  algorithms for matrices {III}: {Computing} a compressed approximate matrix
  decomposition, {\em SIAM J Comput}, 36(1):184--206, 2006.
\newblock \doi{10.1137/S0097539704442702}.

\bibitem{fannes-mps-1992}
\textsc{ M.~Fannes, B.~Nachtergaele, and R.~Werner}, Finitely correlated states
  on quantum spin chains, {\em Communications in Mathematical Physics},
  144(3):443--490, 1992.

\bibitem{gostz-maxvol-2010}
\textsc{ S.~A. Goreinov, I.~V. Oseledets, D.~V. Savostyanov, E.~E.
  Tyrtyshnikov, and N.~L. Zamarashkin}, How to find a good submatrix, in Matrix
  Methods: Theory, Algorithms, Applications, V.~Olshevsky and E.~Tyrtyshnikov,
  eds., World Scientific, Hackensack, NY, 2010, pp.~247--256.

\bibitem{gt-maxvol-2001}
\textsc{ S.~A. Goreinov and E.~E. Tyrtyshnikov}, The maximal-volume concept in
  approximation by low-rank matrices, {\em Contemporary Mathematics},
  208:47--51, 2001.

\bibitem{gt-skel-2011}
\leavevmode\vrule height 2pt depth -1.6pt width 23pt, Quasioptimality of
  skeleton approximation of a matrix in the {Chebyshev} norm, {\em Doklady
  Math.}, 83(3):374--375, 2011.

\bibitem{gt-psa-1995}
\textsc{ S.~A. Goreinov, E.~E. Tyrtyshnikov, and N.~L. Zamarashkin},
  Pseudo--skeleton approximations of matrices, {\em Reports of Russian Academy
  of Sciences}, 342(2):151--152, 1995.

\bibitem{gtz-psa-1997}
\leavevmode\vrule height 2pt depth -1.6pt width 23pt, A theory of
  pseudo--skeleton approximations, {\em Linear Algebra Appl.}, 261:1--21, 1997.
\newblock \doi{10.1016/S0024-3795(96)00301-1}.

\bibitem{larskres-survey-2013}
\textsc{ L.~Grasedyck, D.~Kressner, and C.~Tobler}, A literature survey of
  low-rank tensor approximation techniques, {arXiv} preprint 1302.7121, 2013.
\newblock \url{ http://arxiv.org/abs/1302.7121}.

\bibitem{hackbusch-2012}
\textsc{ W.~Hackbusch}, Tensor spaces and numerical tensor calculus,
  Springer--Verlag, Berlin, 2012.

\bibitem{hastings-mcmc-1970}
\textsc{ W.~K. Hastings}, Monte {Carlo} sampling methods using {Markov} chains
  and their applications, {\em Biometrika}, 57(1):97--109, 1970.
\newblock \doi{10.1093/biomet/57.1.97}.

\bibitem{holtz-ALS-DMRG-2012}
\textsc{ S.~Holtz, T.~Rohwedder, and R.~Schneider}, The alternating linear
  scheme for tensor optimization in the tensor train format, {\em SIAM J. Sci.
  Comput.}, 34(2):A683--A713, 2012.
\newblock \doi{10.1137/100818893}.

\bibitem{jeckelmann-dmrgsolve-2002}
\textsc{ E.~Jeckelmann}, Dynamical density--matrix renormalization--group
  method, {\em Phys Rev B}, 66:045114, 2002.
\newblock \doi{10.1103/PhysRevB.66.045114}.

\bibitem{khkaz-conv-2011}
\textsc{ V.~Kazeev, B.~N. Khoromskij, and E.~E. Tyrtyshnikov}, Multilevel
  {Toeplitz} matrices generated by tensor-structured vectors and convolution
  with logarithmic complexity, Tech. Rep.~36, MPI MIS, Leipzig, 2011.
\newblock \url{
  http://www.mis.mpg.de/publications/preprints/2011/prepr2011-36.html}.

\bibitem{khor-qtt-2011}
\textsc{ B.~N. Khoromskij}, $\mathcal{O}(d \log n)$--{Quantics} approximation
  of {$N$--$d$} tensors in high-dimensional numerical modeling, {\em Constr.
  Appr.}, 34(2):257--280, 2011.
\newblock \doi{10.1007/s00365-011-9131-1}.

\bibitem{khor-survey-2011}
\leavevmode\vrule height 2pt depth -1.6pt width 23pt, Tensor-structured
  numerical methods in scientific computing: {Survey} on recent advances, {\em
  Chemometr. Intell. Lab. Syst.}, 110(1):1--19, 2012.
\newblock \doi{10.1016/j.chemolab.2011.09.001}.

\bibitem{klumper-mps-1993}
\textsc{ A.~Kl\"{u}mper, A.~Schadschneider, and J.~Zittartz}, Matrix product
  ground states for one-dimensional spin-1 quantum antiferromagnets, {\em
  Europhys. Lett.}, 24(4):293--297, 1993.
\newblock \doi{10.1209/0295-5075/24/4/010}.

\bibitem{kolda-review-2009}
\textsc{ T.~G. Kolda and B.~W. Bader}, Tensor decompositions and applications,
  {\em SIAM Review}, 51(3):455--500, 2009.
\newblock \doi{10.1137/07070111X}.

\bibitem{mahoney-simax-2008}
\textsc{ M.~W. Mahoney, M.~Maggioni, and P.~Drineas}, Tensor-{CUR}
  decompositions for tensor-based data, {\em SIAM J. Matr. Anal. Appl.},
  30(3):957--987, 2008.
\newblock \doi{10.1137/060665336}.

\bibitem{Os-mvk2-2011}
\textsc{ I.~V. Oseledets}, {DMRG} approach to fast linear algebra in the
  {TT}--format, {\em Comput. Meth. Appl. Math}, 11(3):382--393, 2011.

\bibitem{osel-tt-2011}
\leavevmode\vrule height 2pt depth -1.6pt width 23pt, Tensor-train
  decomposition, {\em SIAM J. Sci. Comput.}, 33(5):2295--2317, 2011.
\newblock \doi{10.1137/090752286}.

\bibitem{osel-constr-2013}
\leavevmode\vrule height 2pt depth -1.6pt width 23pt, Constructive
  representation of functions in low-rank tensor formats, {\em Constr. Appr.},
  37(1):1--18, 2013.
\newblock \doi{10.1007/s00365-012-9175-x}.

\bibitem{ost-tucker-2008}
\textsc{ I.~V. Oseledets, D.~V. Savostianov, and E.~E. Tyrtyshnikov}, Tucker
  dimensionality reduction of three-dimensional arrays in linear time, {\em
  SIAM J. Matrix Anal. Appl.}, 30(3):939--956, 2008.
\newblock \doi{10.1137/060655894}.

\bibitem{ost-chem-2010}
\textsc{ I.~V. Oseledets, D.~V. Savostyanov, and E.~E. Tyrtyshnikov}, Cross
  approximation in tensor electron density computations, {\em Numer. Linear
  Algebra Appl.}, 17(6):935--952, 2010.
\newblock \doi{10.1002/nla.682}.

\bibitem{ot-tt-2009}
\textsc{ I.~V. Oseledets and E.~E. Tyrtyshnikov}, Breaking the curse of
  dimensionality, or how to use {SVD} in many dimensions, {\em SIAM J. Sci.
  Comput.}, 31(5):3744--3759, 2009.
\newblock \doi{10.1137/090748330}.

\bibitem{ot-ttcross-2010}
\leavevmode\vrule height 2pt depth -1.6pt width 23pt, {TT-cross} approximation
  for multidimensional arrays, {\em Linear Algebra Appl.}, 432(1):70--88, 2010.
\newblock \doi{10.1016/j.laa.2009.07.024}.

\bibitem{ostlund-dmrg-1995}
\textsc{ S.~{\"Ostlund} and S.~Rommer}, Thermodynamic limit of density matrix
  renormalization, {\em Phys. Rev. Lett.}, 75(19):3537--3540, 1995.
\newblock \doi{10.1103/PhysRevLett.75.3537}.

\bibitem{ushmaev-tt-2013}
\textsc{ T.~Rohwedder and A.~Uschmajew}, Local convergence of alternating
  schemes for optimization of convex problems in the {TT} format, {\em SIAM J
  Num. Anal.}, 51(2):1134--1162, 2013.
\newblock \doi{10.1137/110857520}.

\bibitem{sav-2006}
\textsc{ D.~V. Savostyanov}, Polilinear approximation of matrices and integral
  equations, PhD thesis, INM RAS, Moscow, 2006.
\newblock (in Russian), \url{
  http://www.inm.ras.ru/library/Tyrtyshnikov/savostyanov_disser.pdf}.

\bibitem{so-dmrgi-2011proc}
\textsc{ D.~V. Savostyanov and I.~V. Oseledets}, Fast adaptive interpolation of
  multi-dimensional arrays in tensor train format, in Proceedings of 7th
  International Workshop on Multidimensional Systems (nDS), IEEE, 2011.
\newblock \doi{10.1109/nDS.2011.6076873}.

\bibitem{schneider-cross2d-2010}
\textsc{ J.~Schneider}, Error estimates for two--dimensional cross
  approximation, {\em J. Approx. Theory}, 162:1685--1700, 2010.
\newblock \doi{10.1016/j.jat.2010.04.012}.

\bibitem{schollwock-dmrg-mps-2010}
\textsc{ U.~Schollw\"ock}, The density-matrix renormalization group in the age
  of matrix product states, {\em Ann. Phys}, 326(1):96--192, 2011.

\bibitem{smolyak-1963}
\textsc{ S.~A. Smolyak}, Quadrature and interpolation formulas for tensor
  products of certain class of functions, {\em Dokl. Akad. Nauk SSSR},
  148(5):1042--1053, 1964.
\newblock Transl.: Soviet Math. Dokl. 4:240-243, 1963.

\bibitem{tee-cross-2000}
\textsc{ E.~E. Tyrtyshnikov}, Incomplete cross approximation in the
  mosaic--skeleton method, {\em Computing}, 64(4):367--380, 2000.
\newblock \doi{10.1007/s006070070031}.

\bibitem{tee-tensor-2003}
\leavevmode\vrule height 2pt depth -1.6pt width 23pt, Tensor approximations of
  matrices generated by asymptotically smooth functions, {\em Sbornik:
  Mathematics}, 194(6):941--954, 2003.
\newblock \doi{10.1070/SM2003v194n06ABEH000747}.

\bibitem{tee-kron-2004}
\leavevmode\vrule height 2pt depth -1.6pt width 23pt, Kronecker-product
  approximations for some function-related matrices, {\em Linear Algebra
  Appl.}, 379:423--437, 2004.
\newblock \doi{10.1016/j.laa.2003.08.013}.

\bibitem{white-dmrg-1993}
\textsc{ S.~R. White}, Density-matrix algorithms for quantum renormalization
  groups, {\em Phys. Rev. B}, 48(14):10345--10356, 1993.
\newblock \doi{10.1103/PhysRevB.48.10345}.

\end{thebibliography}

\end{document}